\documentclass{amsart}

\usepackage{amsmath, amssymb, amsthm, amsfonts, amsxtra, amscd}

\input xy
\xyoption{all}

\usepackage{hyperref}

\swapnumbers 
\numberwithin{equation}{section}

\theoremstyle{plain}
\newtheorem{theorem}[subsection]{Theorem}
\newtheorem{lemma}[subsection]{Lemma}
\newtheorem{prop}[subsection]{Proposition}
\newtheorem{cor}[subsection]{Corollary}
\newtheorem{conj}[subsection]{Conjecture}

\theoremstyle{definition}
\newtheorem{defn}[subsection]{Definition}


\def\AA{\mathbb{A}}

\def\CC{\mathbb{C}}
\def\DD{\mathbb{D}}

\def\FF{\mathbb{F}}

\def\LL{\mathbb{L}}

\def\PP{\mathbb{P}}
\def\QQ{\mathbb{Q}}

\def\ZZ{\mathbb{Z}}


\def\calA{\mathcal{A}}
\def\calB{\mathcal{B}}
\def\calC{\mathcal{C}}

\def\calE{\mathcal{E}}
\def\calF{\mathcal{F}}

\def\calH{\mathcal{H}}
\def\calI{\mathcal{I}}
\def\calJ{\mathcal{J}}

\def\calL{\mathcal{L}}

\def\calO{\mathcal{O}}

\def\calT{\mathcal{T}}

\def\calV{\mathcal{V}}

\def\calY{\mathcal{Y}}


\def\bR{\mathbf{R}}

\newcommand{\tpi}{\widetilde{\pi}}
\newcommand{\tilL}{\widetilde{L}}
\newcommand{\tilT}{\widetilde{T}}
\newcommand{\tilU}{\widetilde{U}}
\newcommand{\tils}{\widetilde{s}}


\newcommand\hatO{\widehat{\calO}}
\newcommand\hatK{\widehat{K}}
\newcommand\hatR{\widehat{R}}

\newcommand{\tcB}{\widetilde{\calB}}
\newcommand{\tcC}{\widetilde{\calC}}

\renewcommand{\frm}{\mathfrak{m}}

\newcommand\una{\underline{a}}

\newcommand\bareta{\overline{\eta}}


\newcommand{\codim}{\textup{codim}}

\newcommand{\Frob}{\textup{Frob}}
\newcommand\Gal{\textup{Gal}}
\newcommand{\Gr}{\textup{Gr}}
\newcommand{\Hilb}{\textup{Hilb}}
\newcommand\id{\textup{id}}
\newcommand\Jac{\textup{Jac}}
\newcommand\mult{\textup{mult}}

\newcommand{\Pic}{\textup{Pic}}

\newcommand\rank{\textup{rank}}
\newcommand{\red}{\textup{red}}

\newcommand\rk{\textup{rk}}

\newcommand\sing{\textup{sing}}
\newcommand\Spec{\textup{Spec}}
\newcommand\Spf{\textup{Spf}}

\newcommand\Sym{\textup{Sym}}
\newcommand\Tot{\textup{Tot}}
\newcommand{\univ}{\textup{univ}}
\newcommand{\val}{\textup{val}}
\newcommand{\Var}{\textup{Var}}
\newcommand{\vir}{\textup{vir}}

\newcommand\Hom{\textup{Hom}}

\newcommand\uHom{\underline{\Hom}}

\newcommand{\Ext}{\textup{Ext}}

\newcommand\GL{\textup{GL}}


\newcommand{\isom}{\stackrel{\sim}{\to}}

\newcommand{\leftexp}[2]{{\vphantom{#2}}^{#1}{#2}}
\newcommand{\pH}{\leftexp{p}{\textup{H}}}

\newcommand{\Ql}{\QQ_\ell}
\newcommand{\const}[1]{\QQ_{\ell,#1}}
\newcommand{\twtimes}[1]{\stackrel{#1}{\times}}
\newcommand{\cohog}[2]{\textup{H}^{#1}({#2})}     
\newcommand{\HBM}[2]{\textup{H}^{\textup{BM}}_{#1}({#2})}  

\newcommand{\cJac}{\overline{\textup{Jac}}}
\newcommand{\cJ}{\overline{\calJ}}
\newcommand{\oleft}{\overleftarrow}
\newcommand{\oright}{\overrightarrow}
\newcommand{\sm}{\textup{sm}}
\newcommand{\hotimes}{\widehat{\otimes}}


\newcommand{\pr}{\textup{pr}}
\newcommand{\cPic}{\overline{\Pic}}
\newcommand{\Aint}{\calA^{\textup{int}}}

\setlength{\textwidth}{460pt}
\setlength{\oddsidemargin}{0pt}
\setlength{\evensidemargin}{0pt}
\setlength{\topmargin}{0pt}
\setlength{\textheight}{620pt}

\title{Macdonald formula for curves with planar singularities}
\author{Davesh Maulik}
\address{MIT, 77 Massachusetts Avenue, Cambridge MA 02139}
\email{dmaulik@math.mit.edu}
\author{Zhiwei Yun}
\address{MIT, 77 Massachusetts Avenue, Cambridge MA 02139}
\email{zyun@math.mit.edu}
\date{}
\subjclass[2010]{14H20, 14H40, 14F43}
\keywords{}

\begin{document}

\begin{abstract}
We generalize Macdonald's formula for the cohomology of Hilbert schemes of points on a curve from smooth curves to curves with planar singularities: we relate the cohomology of the Hilbert schemes to the cohomology of the compactified Jacobian of the curve. The new formula is a consequence of a stronger identity between certain perverse sheaves defined by a family of curves satisfying mild conditions. The proof makes an essential use of Ng\^o's support theorem for compactified Jacobians and generalizes this theorem to the relative Hilbert scheme of such families. As a consequence, we generalize part of the Weil conjectures to the Hilbert-zeta function of curves with planar singularities.

\end{abstract}

\maketitle

\section{Introduction}
Let $C$ be a smooth projective connected curve over an algebraically closed field $k$. Let $\Sym^n(C)$ be the $n$-th symmetric product of $C$. Macdonald's formula \cite{Mac} says there is a canonical isomorphism between graded vector spaces
\begin{equation}\label{Mac}
\cohog{*}{\Sym^n(C)}\cong\Sym^n(\cohog{*}{C})=\bigoplus_{i+j\leq n, i,j\geq0}\bigwedge^i(\cohog{1}{C})[-i-2j](-j).
\end{equation}
Here $[?]$ denotes cohomological shift and $(?)$ denotes the Tate twist, and this formula respects weight filtrations (when $k\subset\CC$) or Frobenius actions (when $C$ is defined over a finite field). If we define the cohomological zeta function of $C$ to be the formal power series in one variable $t$ with coefficients in graded vector spaces:
\begin{equation}\label{L}
Z(t,C):=\frac{\bigoplus_{i=0}^{2g}(\bigwedge^i\cohog{1}{C})[-i]t^i}{(1-\Ql t)(1-\Ql[-2](-1)t)}
\end{equation}
we may rewrite \eqref{Mac} for all $n$ at once as an identity between formal power series
\begin{equation*}
\bigoplus_{n\geq0}\cohog{*}{\Sym^n(C)}t^n\cong Z(t,C).
\end{equation*}

The purpose of this note is to generalize Macdonald's formula to projective integral curves $C$ with planar singularities. In this case, we will work with the Hilbert schemes $\Hilb^n(C)$ instead of symmetric powers. We can reinterpret the numerator of \eqref{L} as the total cohomology of the Jacobian $\Jac(C)$ of $C$. In the case $C$ is singular,  we work with the compactified Jacobian $\cJac(C)$ classifying torsion-free, rank one coherent sheaves with a fixed degree on $C$.

The main result of this note is the following theorem, which was conjectured by L.Migliorini. While writing this note, we learned that L.Migliorini and V.Shende have an independent proof of the conjecture in the case $k=\CC$.

\begin{theorem}\label{th:main}
Let $C$ be an integral projective curve over $k$ of arithmetic genus $g_{a}$ with planar singularities. Assume either $\textup{char}(k)=0$ or $\textup{char}(k)>\max\{\mult_{p}(C);p\in C\}$ ($\mult_{p}(C)$ is the multiplicity of $C$ at $p$). Then there exists a canonical increasing filtration $P_{\leq i}$ on $\cohog{*}{\cJac(C)}$, normalized such that $\Gr^{P}_{i}\cohog{*}{\cJac(C)}=0$ unless $0\leq i\leq 2g_{a}$, such that for each non-negative integer $n$, there is an isomorphism between {\em graded} vector spaces (gradings are given by $*$, and are shifted by $[-2j]$ in the usual way)
\begin{equation}\label{main}
\cohog{*}{\Hilb^n(C)}\cong\bigoplus_{i+j\leq n, i,j\geq0}\Gr^{P}_{i}(\cohog{*}{\cJac(C)})[-2j](-j).
\end{equation}

\end{theorem}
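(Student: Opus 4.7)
The plan is to embed $C$ as a fiber of a flat family $\pi\colon\calC\to B$ of projective integral curves with planar singularities over a smooth irreducible base, chosen so that $\calC$ is smooth, the generic fiber of $\pi$ is smooth, and the characteristic hypothesis is satisfied at every fiber. Over $B$ I form the relative compactified Jacobian $\overline{\pi}^{J}\colon\cJ\to B$ and, for each $n\geq 0$, the relative Hilbert scheme $\pi^{H}_{n}\colon\Hilb^{n}(\calC/B)\to B$. The filtration $P_{\leq i}$ in the statement is defined as the restriction to the fiber at the point $b\in B$ corresponding to $C$ of the perverse filtration on $R\overline{\pi}^{J}_{*}\Ql$, and the vanishing $\Gr^{P}_{i}=0$ for $i\notin[0,2g_{a}]$ reflects the relative dimension $g_{a}$ of $\overline{\pi}^{J}$. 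The theorem reduces, by proper base change at $b$, to a sheaf-level Macdonald identity on $B$ of the form
\begin{equation*}
R\pi^{H}_{n,*}\Ql\;\cong\;\bigoplus_{i+j\leq n,\,i,j\geq 0}\Gr^{P}_{i}\bigl(R\overline{\pi}^{J}_{*}\Ql\bigr)[-2j](-j),
\end{equation*}
where $\Gr^{P}_{i}$ denotes the appropriately normalized $i$-th graded piece of the perverse filtration.

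To prove this identity I would invoke, and then extend, Ng\^o's support theorem. Applied to the weak abelian fibration $\overline{\pi}^{J}\colon\cJ\to B$ with its action of the relative Picard stack -- here the $\delta$-regularity required by Ng\^o is supplied by the versality of $\calC\to B$ together with the hypothesis on $\textup{char}(k)$ -- the theorem says that every simple perverse constituent of $R\overline{\pi}^{J}_{*}\Ql$ has full support $B$. The essential new input is the analogous statement for the relative Hilbert scheme: every simple perverse constituent of $R\pi^{H}_{n,*}\Ql$ also has full support $B$. Once both support theorems are in hand, both sides of the sheaf-level identity are direct sums of simple perverse sheaves of full support and are therefore determined by their restriction to any dense open subset of $B$. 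Restricting to the smooth locus $B^{\sm}\subset B$ where the fibers of $\pi$ are smooth, $\Hilb^{n}(\calC/B)$ becomes the relative symmetric product and $\cJ$ becomes the relative Jacobian, and Macdonald's classical formula \eqref{Mac} in families identifies the two sides. The support theorems then extend this equality uniquely from $B^{\sm}$ to all of $B$, and specialization at $b$ yields \eqref{main}.

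The main obstacle I anticipate is the support theorem for $R\pi^{H}_{n,*}\Ql$: unlike the compactified Jacobian, the Hilbert scheme does not carry a free transitive action of a smooth group scheme that controls its $\delta$-stratification. I would attack it through the Abel--Jacobi morphism $\textup{AJ}_{n}\colon\Hilb^{n}(\calC/B)\to\cJ$ (after fixing a degree component). Generically over $\cJ$ its fibers are projective spaces, so on a dense open $R(\textup{AJ}_{n})_{*}\Ql$ is a sum of Tate twists of constant sheaves; one must then control the codimension of the locus where $\textup{AJ}_{n}$ fails to be a projective bundle, relate this to the $\delta$-invariant of the fibers of $\pi$, and combine with Ng\^o's codimension bound on $B$. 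This should produce a $\delta$-regularity estimate for $\pi^{H}_{n}$ sufficient to force the simple perverse constituents of $R\pi^{H}_{n,*}\Ql$ to have full support, after which the remaining steps -- checking the identity on $B^{\sm}$ via the classical Macdonald formula and extending by the support theorem -- are formal.
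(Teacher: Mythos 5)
Your high-level architecture matches the paper's exactly: deform $C$ in a family with $\delta$-regularity, take the perverse filtration on the direct image of the relative compactified Jacobian, reduce \eqref{main} to a relative Macdonald identity on the base, and prove that identity by a support theorem together with the classical Macdonald formula over the locus of smooth fibers. The gap is in the proposed proof of the support theorem for $R\pi^{H}_{n,*}\Ql$, which you correctly single out as the essential new input.

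The Abel--Jacobi route you sketch does not work in the range $0\leq n<2g_{a}-1$. The fiber of $\textup{AJ}_{n}$ over a torsion-free rank-one sheaf $\calF$ has dimension governed by $\dim\Hom(\calF,\calO_{\calC_{b}})$, which is a Brill--Noether invariant, not a $\delta$-invariant: for $g_{a}\leq n<2g_{a}-1$ the fiber dimension already jumps on Brill--Noether loci over the open locus of smooth curves, and for $n<g_{a}$ the map $\textup{AJ}_{n}$ is not even dominant. The jumping loci therefore have codimension in $\cJ$ that is unrelated to the $\delta$-stratification of the base, so no estimate of the shape you propose (codimension of the non-projective-bundle locus bounded in terms of $\delta$, combined with Ng\^o's codimension bound) can hold, and the intermediate support statement for $R(\textup{AJ}_{n})_{*}\Ql$ on $\cJ$ simply fails. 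The paper circumvents this entirely by a \emph{downward} induction on $n$. For $n\geq 2g_{a}-1$ the Abel--Jacobi map is an honest projective bundle by Altman--Kleiman, so the support theorem for $\calH_{n}$ follows directly from Ng\^o's (Theorem~\ref{th:supp}). For smaller $n$ one constructs a cohomological correspondence $\calT_{n}$ (``add a smooth point'') between $\calH_{n}\times X$ and $\calH_{n+1}$ over $\Sym^{n+1}(X)\times\calB$, where $\nu:\calC\to X\times\calB$ is a chosen finite map to a smooth auxiliary curve. Smallness of the Hilbert--Chow map $\nu_{n}:\calH_{n}\to\Sym^{n}(X)\times\calB$ (Lemma~\ref{l:small}) makes $R\nu_{n,*}\Ql$ a middle-extension perverse sheaf (Corollary~\ref{c:midext}), and a computation at the generic point then shows every simple perverse constituent of $Rf_{n,*}\Ql$ appears in $Rf_{n+1,*}\Ql$ (Proposition~\ref{p:supp}), closing the induction. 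This smallness and middle-extension mechanism is precisely what replaces the codimension estimate your Abel--Jacobi approach would need but cannot supply.

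A secondary omission: the theorem asserts that $P_{\leq i}$ is canonical, so you must prove independence of the chosen family. The paper does this by comparing any admissible family to a versal deformation of $C$ via Proposition~\ref{p:canonfil}; your proposal does not address this point.
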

The canonical filtration $P_{\leq i}$ on $\cohog{*}{\cJac(C)}$ is defined by embedding the curve $C$ into a suitable family of curves, and taking the perverse filtration on the direct image complex of the compactified Jacobian of the family. For this reason, we call $P_{\leq i}$ the {perverse filtration}. It turns out that this filtration is independent of the choice of the family (see Proposition \ref{p:canonfil} and \S\ref{ss:proof}).

If we define the cohomological zeta function of $C$ with respect to the perverse filtration to be
\begin{equation*}
Z_{P}(t,C):=\frac{\bigoplus_{i=0}^{2g_{a}}\Gr^{P}_{i}\cohog{*}{\cJac(C)}t^i}{(1-\Ql t)(1-\Ql[-2](-1)t)}.
\end{equation*}
Then we can restate the main theorem as an identity between formal power series in graded vector spaces
\begin{equation}\label{Zeta}
\bigoplus_{n\geq0}\cohog{*}{\Hilb^n(C)}t^n\cong Z_{P}(t,C).
\end{equation}

Qualitatively, this theorem says that the cohomological information of all the Hilbert schemes is already contained in the cohomology of the compactified Jacobian, equipped with the perverse filtration $P_{\leq i}$.  Notice that, in this expression, the grading by number of points on the left-hand side is partially converted into the perverse filtration.

When the relevant cohomology groups carry weight filtrations (if $k\subset\CC$) or Frobenius actions (if $C$ is defined over a finite field), the isomorphism \eqref{main} respects these extra structures. When $C$ is smooth, we recover \eqref{Mac} from \eqref{main}. For a curve $C$ defined over a finite field $\FF_{q}$, one may consider its Hilbert-zeta function:
\begin{equation*}
Z_{\Hilb}(t,C/\FF_{q})=\sum_{n\geq0}\#\Hilb^{n}(C)(\FF_{q})t^{n}.
\end{equation*}
Our main result has the following consequence which generalizes part of the Weil conjecture for smooth curves.

\begin{theorem}\label{th:zeta} Let $C$ be an integral projective curve of arithmetic genus $g_{a}$ over $\FF_{q}$ with planar singularities. Suppose $\textup{char}(\FF_{q})>\max\{\mult_{p}(C);p\in C(\overline{\FF}_{q})\}$. Then
\begin{enumerate}
\item $Z_{\Hilb}(t,C/\FF_{q})$ has the form $\frac{P(t)}{(1-t)(1-qt)}$ for some polynomial $P(t)\in\ZZ[t]$ of degree $2g_{a}$.
\item $Z_{\Hilb}(t,C/\FF_{q})$ satisfies the functional equation
\begin{equation*}
Z_{\Hilb}(t,C/\FF_{q})=(qt^{2})^{(g_{a}-1)}Z_{\Hilb}(q^{-1}t^{-1},C/\FF_{q}).
\end{equation*}
\end{enumerate}
\end{theorem}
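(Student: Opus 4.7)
The plan is to derive both statements from Theorem \ref{th:main} by applying the Grothendieck--Lefschetz trace formula, together with a Poincar\'e-duality symmetry of the perverse filtration that supplies the functional equation. For part (1), set $a_i := \sum_k (-1)^k \mathrm{tr}(\Frob_q \mid \Gr^P_i \cohog{k}{\cJac(C)})$ for $0 \le i \le 2g_a$. The shift $[-2j]$ in \eqref{main} is even and contributes no sign, while the Tate twist $(-j)$ scales Frobenius eigenvalues by $q^j$. Applying the trace formula to \eqref{main} gives
\begin{equation*}
\#\Hilb^n(C)(\FF_q) = \sum_{i+j \le n,\; i,j \ge 0} a_i\, q^j,
\end{equation*}
and summing in $n$ and swapping the two sums collapses the generating function to $Z_\Hilb(t,C/\FF_q) = P(t)/((1-t)(1-qt))$ with $P(t) = \sum_{i=0}^{2g_a} a_i t^i$; integrality of $P$ is then forced by integrality of $\#\Hilb^n(C)(\FF_q)$. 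That $\deg P = 2g_a$ reduces to $\Gr^P_{2g_a}\cohog{*}{\cJac(C)} \ne 0$, which holds because $\cJac(C)$ is irreducible of dimension $g_a$ (Altman--Iarrobino--Kleiman), so its top cohomology $\cohog{2g_a}{\cJac(C)} \cong \Ql(-g_a)$ is nonzero and sits in $\Gr^P_{2g_a}$ by the Poincar\'e-duality symmetry of the perverse filtration.

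For part (2), substituting $t \mapsto (qt)^{-1}$ in the rational form reduces the functional equation to the numerical symmetry $a_i = q^{i - g_a} a_{2g_a - i}$ for $0 \le i \le 2g_a$. I would derive this from a sheaf-level symmetry of the perverse filtration itself. By construction $P_{\le \bullet}$ is pulled back from the perverse filtration on $R\pi_*\Ql$, where $\pi\colon \cJ \to \calA$ is the relative compactified Jacobian of a ``good'' family of curves through $C$ (the same family that enters the proof of Theorem \ref{th:main}); in this setting $\cJ$ is smooth and $\pi$ is proper of relative dimension $g_a$. Relative Poincar\'e duality combined with relative hard Lefschetz for a $\pi$-ample line bundle on $\cJ$ gives an isomorphism of perverse sheaves $\pH^k(R\pi_*\Ql) \isom \pH^{-k}(R\pi_*\Ql)(-k)$ on $\calA$, which after restricting to $[C]\in\calA$ and passing to associated graded becomes an isomorphism of graded vector spaces
\begin{equation*}
\Gr^P_i\cohog{*}{\cJac(C)} \isom \Gr^P_{2g_a-i}\cohog{*}{\cJac(C)}[2(g_a-i)](g_a-i).
\end{equation*}
Taking alternating Frobenius traces (the even cohomological shift contributes no sign and the Tate twist contributes $q^{i-g_a}$) yields $a_i = q^{i-g_a}a_{2g_a-i}$, which is exactly the symmetry needed.

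The principal obstacle is justifying the relative hard Lefschetz step when the fibers of $\pi$ are singular compactified Jacobians rather than smooth abelian varieties. Smoothness of the total space $\cJ$ is the same input that powers Ng\^o's support theorem in the proof of Theorem \ref{th:main}, and a $\pi$-ample polarization can be produced, e.g., from a relative theta divisor or from a projective embedding of the relative Hilbert scheme. Once both ingredients are in hand, the decomposition theorem delivers the perverse-sheaf symmetry, after which the remainder of the argument is a routine manipulation of the rational expression.
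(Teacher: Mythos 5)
Your argument is correct and follows essentially the same route as the paper: both parts are obtained by applying the Grothendieck--Lefschetz trace formula to the cohomological identity of Theorem \ref{th:main}, and the numerical symmetry $a_i=q^{i-g_a}a_{2g_a-i}$ behind the functional equation comes from relative hard Lefschetz for the family of compactified Jacobians, restricted to the fiber over $[C]$. The one minor variation is your choice of polarization: the paper applies hard Lefschetz with the determinant line bundle $\calL_{\det}$, which is relatively ample only over the smooth-fiber locus $\calB_0$, and then uses the support theorem (Proposition \ref{p:supp}) and the middle-extension property of the $\pH^{\bullet}L$ to propagate the isomorphism \eqref{c1i} from $\calB_0$ to all of $\calB$; your proposal to use an honestly $\pi$-ample class sidesteps that propagation step and lands on the same symmetry of associated gradeds, which is all that is needed for the numerics.
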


We also have a local version of the above results, with $C$ replaced by the completed local ring $\hatO$ of a planar curve singularity. The local version is weaker than Theorem \ref{th:main} in the sense that it is an identity between virtual Poincar\'e polynomials instead of the cohomology groups themselves. For more details, see \S\ref{ss:local} and Theorem \ref{th:local}.

In the local case, there is a conjectural relation between the virtual Poincar\'e polynomial of the punctual Hilbert scheme and the Khovanov-Rozansky homology of the associated link of the singular point, proposed by Oblomkov, Rasmussen and Shende (\cite{ORS} and \cite{OS}).  It would be interesting to see whether this relation can be better understood in terms of the Lefschetz filtration on the cohomology of the compactified Jacobian.

\subsection{Idea of the proof}
Theorem \ref{th:main} is proved by embedding $C$ into a suitable family of curves, satisfying three conditions axiomatized as (A1)-(A3) in \S\ref{ss:assumption}. For any such family of curves $\calC\to\calB$, we prove a global analog of the formula \eqref{main} in Theorem \ref{th:pervmac}, which is an identity because perverse sheaves on the base of the family coming from the cohomology of the relative Hilbert scheme and the relative compactified Jacobian. The key step in the proof of the global formula is Proposition \ref{p:supp} saying that any simple perverse constituent of the direct image complex of $\Hilb^{n}(\calC/\calB)\to\calB$ has full support $\calB$. We use a cohomological correspondence argument for a descending induction on the number of points, reducing this support theorem to Ng\^o's support theorem for compactified Jacobians (Theorem \ref{th:supp}). Therefore, our result is an application of Ng\^o's powerful technique to a more classical setting.

\noindent{\bf Acknowledgement}
The authors would like to thank A. Oblomkov for bringing this problem to their attention and V. Shende for helpful discussions. D.M. is supported by the Clay Research Fellowship. Z.Y. is partially supported by the NSF grant DMS-0969470.

\section{Macdonald formula for families of curves}

\subsection{Assumptions on the family of curves}\label{ss:assumption}
In this section, $k$ is any algebraically closed field.
Let $\pi: \calC\to \calB$ be a locally projective flat family of curves over an irreducible base $\calB$. Let $g_{a}$ be the common arithmetic genus of the fibers of $\pi$. For each integer $n\geq0$, let $f_n:\calH_n=\Hilb^{n}(\calC/\calB)\to \calB$ be the relative Hilbert scheme of $n$-points on the fibers of $\pi$. We assume:
\begin{enumerate} 
\item[(A1)] Each geometric fiber $\calC_b$ ($b\in \calB$) is integral and has only planar singularities;
\item[(A2)] For each $0\leq n\leq2g_{a}-1$, the total space $\calH_n$ is smooth;
\item[(A3)] For every (not necessarily closed) point $b\in\calB$, we have the $\delta$-invariant $\delta(b)$ of the fiber $\calC_b$. Then we have
\begin{equation*}
\codim_{\calB}(\overline{\{b\}})\geq\delta(b).
\end{equation*}
Here $\overline{\{b\}}$ is the Zariski closure of $b$ in $\calB$.
\end{enumerate}
Note that (A2) for $n=0$ implies $\calB$ is smooth. We denote $\dim\calB$ by $d_{\calB}$. Also, (A3) implies that the generic fiber of $\pi$ is smooth, because the locus with $\delta(b)\geq1$ has codimension at least one in $\calB$.

\subsection{Compactified Jacobians}\label{ss:Jac}
In the following discussion, we assume
\begin{enumerate}
\item[(A4)] The family $\pi$ admits a section $s:\calB\to\calC^{\sm}$, where $\calC^{\sm}\subset\calC$ is the open subscheme with $\calC_{b}^{\sm}$ being the smooth locus of $\calC_{b}$ for each $b\in\calB$.
\end{enumerate}
With this assumption, one may define compactified Picard schemes $p_n:\cPic_n\to \calB$ of the family $\pi:\calC\to \calB$. More precisely, $\cPic_n$ is the sheafification of the following presheaf: it sends every commutative $k$-algebra $R$ to the set $\cPic_n(R)$, the set of isomorphism classes of triples $(b,\calF,\tau)$, where $b\in \calB(R)$, $\calF$ is a torsion-free, generically rank one coherent sheaf on $\calC_b$ (a curve over $\Spec R$), flat over $R$, with 
\begin{equation*}
\chi(\calC_{b'},\calO_{\calC_{b'}})-\chi(\calC_{b'},\calF_{b'})=n
\end{equation*}
for every geometric point $b'\in\Spec R$, and $\tau$ is an isomorphism of $R$-modules $\calF_{s(b)}\isom R$.

We can use the section $s$ to identified the various components $\cPic_n$. After this identification, we denote it by $\cJ$, the relative compactified Jacobian.

\begin{lemma}\label{l:Jsmooth} Under (A1)-(A2) and (A4), we have
\begin{enumerate}
\item The relative compactified Jacobian $\cJ$ is smooth over $k$.
\item Each geometric fiber $\cJ_b$ of $\cJ$ is irreducible of dimension $g_{a}$.
\end{enumerate}
\end{lemma}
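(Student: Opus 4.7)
The plan is to prove part (2) by citing a classical irreducibility result and then derive part (1) by exhibiting $\cJ$ as the smooth descent of the smooth scheme $\calH_{2g_{a}-1}$ along a relative Abel-Jacobi morphism.

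For part (2), I would invoke the theorem of Altman-Iarrobino-Kleiman (refined by Rego in the planar case): the compactified Jacobian of an integral projective curve with planar singularities is irreducible of dimension equal to the arithmetic genus. Combined with (A1), this applies fiberwise over $\calB$.

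For part (1), set $n:=2g_{a}-1$, so that (A2) gives smoothness of $\calH_{n}$. The section $s$ from (A4) produces a universal torsion-free rank-one sheaf $\wt{\calF}$ on $\calC\times_{\calB}\cJ$, and sending $Z\subset\calC_{b}$ to $[I_{Z}]$, trivialized along $s$, defines a relative Abel-Jacobi morphism
\[
AJ\colon \calH_{n}\longrightarrow \cJ.
\]
The key structural claim is that $AJ$ is a Zariski-locally trivial projective bundle of relative dimension $g_{a}-1$. To see this, note that planarity implies every fiber $\calC_{b}$ is Gorenstein, so $\omega_{\calC/\calB}$ is a line bundle and $\uHom(-,\calO_{\calC_{b}})$ preserves the class of torsion-free rank-one sheaves. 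A short Serre-duality computation, valid whenever $n>2g_{a}-2$, shows that for every geometric point $\calF\in\cJ$ the sheaf $\uHom(\calF,\calO_{\calC_{b}})$ has vanishing $H^{1}$ and hence $h^{0}=n-g_{a}+1=g_{a}$. By cohomology and base change, $\calE:=q_{*}\uHom(\wt{\calF},\calO)$ is therefore locally free of rank $g_{a}$ on $\cJ$, where $q\colon\calC\times_{\calB}\cJ\to\cJ$ is the projection. The universal property of the Quot scheme then identifies $\calH_{n}\cong\PP(\calE)$ over $\cJ$.

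Granting this, $AJ$ is smooth and surjective, and since smoothness descends along smooth surjections, part (1) follows from smoothness of $\calH_{n}$. The relative fiber dimension of $\cJ$ over $\calB$ is recovered as $\dim\calH_{n}-d_{\calB}-(g_{a}-1)=n-g_{a}+1=g_{a}$, consistent with (2). The main delicate step is verifying the $H^{1}$-vanishing and base change uniformly on $\cJ$ (whose smoothness is what we are trying to prove), so one must be careful to use only Gorenstein duality on the fibers $\calC_{b}$ together with the fact that all parametrized sheaves are torsion-free rank one; the identification of $\calH_{n}$ with $\PP(\calE)$ then proceeds entirely through the Quot-functor universal property and does not require any hypothesis on $\cJ$.
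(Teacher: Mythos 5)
Your proof takes essentially the same route as the paper: part (2) is Altman--Iarrobino--Kleiman in both, and part (1) follows in both by exhibiting the Abel--Jacobi map $\calH_{2g_a-1}\to\cJ$ as a projective bundle and descending smoothness from $\calH_{2g_a-1}$ (smooth by (A2)) to $\cJ$. The only difference is that the paper simply cites Altman--Kleiman \cite[Theorem 8.4(v)]{AK} for the projective-bundle structure, whereas you sketch a direct proof via Gorenstein/Serre duality; your dimension counts are consistent with the paper's.
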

\begin{proof}
By a result of Altman and Kleiman \cite[Theorem 8.4(v)]{AK}, the morphism $\calH_{n}\to\cPic_n$ sending a subscheme of $\calC$ to its ideal sheaf is a projective space bundle for $n\geq2g_{a}-1$. Since $\calH_{2g_{a}-1}$ is smooth by (A2), so is $\cPic_{2g_{a}-1}$, hence $\cJ$. (2) is the main result of \cite{AIK}.
\end{proof}

Let $p:\cJ\to\calB$ be the projection and $L=\bR p_*\const{\cJ}\in D^{b}_{c}(\calB)$. Since $\cJ$ is smooth and $p$ is locally projective, by the Decomposition Theorem \cite[Th\'eor\`eme 6.2.5]{BBD}, we have (non-canonically)
\begin{equation}\label{decompL}
L\cong\bigoplus_i\pH^iL[-i]
\end{equation}
as objects in $D^b_c(\calB)$. Each perverse sheaf $\pH^iL$ is semisimple.

\begin{theorem}[{B.C.Ng\^o \cite[Th\'eor\`eme 7.2.1]{NgoFL}}]\label{th:supp}
Assume (A1)-(A4) hold. For every $i\in\ZZ$ and every simple constituent $M$ of $\pH^iL$, the support of $M$ is the whole of $\calB$.
\end{theorem}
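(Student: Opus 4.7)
The plan is to apply Ng\^o's abstract support theorem for $\delta$-regular polarizable weak abelian fibrations---of which \cite[Th\'eor\`eme 7.2.1]{NgoFL} is one incarnation---by verifying all of its hypotheses for $p \colon \cJ \to \calB$. The argument is formal once these hypotheses are in place, so the substance of the proof lies in setting up the axiomatic structure.

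First I would equip $p$ with a weak abelian fibration structure. The relative identity component $P := \underline{\Pic}^{0}(\calC/\calB)$ of the Picard scheme acts on $\cJ$ fiberwise by tensor product, normalized via the section $s$ of (A4). At a geometric point $b \in \calB$, the Chevalley d\'evissage of $P_{b}$ reads
\begin{equation*}
1 \to R_{b} \to P_{b} \to B_{b} \to 1,
\end{equation*}
with $B_{b}$ an abelian variety of dimension $g_{a} - \delta(b)$ and $R_{b}$ an affine group of dimension $\delta(b)$. By Lemma~\ref{l:Jsmooth}(2) the fiber $\cJ_{b}$ is irreducible of dimension $g_{a} = \dim P_{b}$, and $P_{b}$ acts simply transitively on the dense open locus of line bundles inside $\cJ_{b}$; in particular all stabilizers are affine, so $(\cJ, P) \to \calB$ is a weak abelian fibration in Ng\^o's sense.

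Next I would verify the remaining two hypotheses. The $\delta$-regularity condition is literally (A3). Polarizability asks for a symmetric self-isogeny of the Tate module $T_{\Ql}(P)$; I would obtain it from the auto-duality of the compactified Jacobian of an integral planar curve, due to Esteves-Gagn\'e-Kleiman and extended in full generality by Arinkin, which identifies $\cJ$ with a component of its own dual and thereby descends to a polarization of the abelian quotient $B$ of $P$. With both hypotheses secured, Ng\^o's freeness estimate---obtained by making the homology of $P$ act on the stalks of $L$---bounds the support $Z$ of any simple constituent $M \subset \pH^{i} L$ by $\codim_{\calB} Z \leq \delta(\eta_{Z})$, where $\eta_{Z}$ is the generic point of $Z$. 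Combined with (A3), which gives the reverse inequality, and with the equidimensionality of $p$ furnished by Lemma~\ref{l:Jsmooth}(2), Ng\^o's inductive argument rules out every possibility except $Z = \calB$.

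The main obstacle is polarizability: Ng\^o's original construction in the Hitchin setting uses the Weil pairing on the smooth locus of a spectral fiber, whereas here one needs the full auto-duality of $\cJ$ for arbitrary planar singularities, rather than a pairing pulled back from the Jacobian of the normalization. Everything else is either formal---the weak abelian fibration axioms and the freeness/induction machinery apply as in the Hitchin case---or follows from the Altman-Kleiman-Iarrobino theory already invoked in Lemma~\ref{l:Jsmooth}.
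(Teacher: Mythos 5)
Your overall strategy---verifying the hypotheses of Ng\^o's abstract support theorem for the weak abelian fibration $(\cJ, \Jac(\calC/\calB)) \to \calB$---is exactly the one the paper takes, and most of your verifications match (equidimensionality from Lemma~\ref{l:Jsmooth}(2), $\delta$-regularity from (A3)). However, there are two concrete gaps and one mischaracterization worth flagging.

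First, your argument for affine stabilizers is incomplete. You observe that $P_b$ acts simply transitively on the open dense locus of line bundles in $\cJ_b$ and conclude ``in particular all stabilizers are affine''---but simple transitivity on a dense open only controls stabilizers at those points, not at the boundary sheaves where the stabilizer is nontrivial. The paper handles this via the local-global product formula \eqref{prod} (the analog of \cite[Proposition 4.15.1]{NgoFL}), which identifies the stabilizer of a torsion-free sheaf $\calF$ with a subgroup of the affine local Picard group $\prod_p \Pic(\hatO_{\calC_b,p})$, following \cite[Corollaire 4.15.2]{NgoFL}. That argument is genuinely needed and your proposal does not supply it.

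Second, your characterization of polarizability as ``the main obstacle'' requiring auto-duality of the compactified Jacobian (Esteves--Gagn\'e--Kleiman, Arinkin) overstates the difficulty and misattributes the dependence. Ng\^o's Weil-pairing construction of the polarization in \cite[\S 4.12]{NgoFL} is not specific to spectral curves: it produces the required alternating pairing on $T_{\Ql}(\Jac(\calC_b))$ for any integral curve by combining the Weil pairing on the abelian quotient with the pairing coming from the toric/unipotent part, and the paper invokes it directly. Your auto-duality route would give a polarization too, but it is a heavier tool with its own hypotheses, and it polarizes the wrong object---what Ng\^o needs is a pairing on the Tate module of the acting group scheme $P$, not a duality statement about $\cJ$ itself.

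Finally, the concluding step is glossed. Ng\^o's theorem does not merely trade codimension inequalities; it shows that a simple constituent $M$ with support $Z$ appears (over an \'etale neighborhood of the generic point of $Z$) as a direct summand of the top direct image $\bR^{2g_a}p_!\Ql$. The decisive input is that the fibers of $\cJ \to \calB$ are \emph{irreducible} (Lemma~\ref{l:Jsmooth}(2)), hence $\bR^{2g_a}p_!\Ql = \Ql$, which is an irreducible perverse sheaf up to shift because $\calB$ is smooth; this forces $Z = \calB$. Your proposal cites Lemma~\ref{l:Jsmooth}(2) only for equidimensionality and leaves the last deduction to ``Ng\^o's inductive argument,'' which omits the ingredient that actually pins down the support.
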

\begin{proof}
We will apply the general result of Ng\^o \cite[Th\'eor\`eme 7.2.1]{NgoFL}. To this end, we need to check that the action of the relative Jacobian $\Jac(\calC/\calB)$ over $\calB$ on $\cJ$ satisfies the assumptions in \cite[\S7.1]{NgoFL}. First of all, $\cJ\to\calB$ is locally projective by Altman and Kleiman \cite{AK}. \cite[7.1.2]{NgoFL} follows from Lemma \ref{l:Jsmooth}(2); \cite[7.1.3]{NgoFL} is checked similarly as in \cite[Corollaire 4.15.2]{NgoFL}, using the analog of the product formula \cite[Proposition 4.15.1]{NgoFL} in the setting of the $\Jac(C)$-action on $\cJac(C)$ (for details see \eqref{prod}); \cite[7.1.4]{NgoFL} is checked in \cite[\S4.12]{NgoFL} using the Weil pairing; \cite[7.1.5]{NgoFL} follows from (A3). Therefore  \cite[Th\'eor\`eme 7.2.1]{NgoFL} is applicable.

Let $Z$ the support of $M$. The conclusion of \cite[Th\'eor\`eme 7.2.1]{NgoFL} is that $M$ appears as a direct summand of $\bR^{2g_{a}}p_!\Ql$, at least when restricted to an \'etale neighborhood of the generic point of $Z$. But by Lemma \ref{l:Jsmooth}(2), $\bR^{2g_{a}}p_!\Ql=\Ql$, which is an irreducible perverse sheaf up to shift, because $\calB$ is smooth. Hence $Z=\calB$.
\end{proof}

\subsection{What if there is no section?}\label{ss:nosec} In this subsection, we discuss why the perverse sheaves $\pH^{i}L$ and Theorem \ref{th:supp} still make sense even if we drop the assumption (A4).

Let $\varphi:\tcB\to\calB$ be an \'etale surjective morphism, and $\tils:\tcB\to\calC^{\sm}$ be a morphism such that $\pi\circ\tils=\varphi$. Such a pair $(\tcB,\tils)$ always exists.

Let $\tcC=\tcB\times_{\calB}\calC$. Then $\tpi:\tcC\to\tcB$ satisfies the assumptions (A1)-(A4). The above discussion on compactified Jacobians make sense for the family $\tpi$. In particular, we have the direct image complex $\tilL\in D^{b}_{c}(\tcB)$ of the compactified Jacobian for $\tpi$, and the perverse sheaves $\pH^{i}\tilL$ satisfy the property of Theorem \ref{th:supp}. The perverse sheaves $\pH^{i}\tilL$ carry obvious descent data with respect to the \'etale covering $\tcB\to\calB$ (here we use the fact that the compactified Jacobian, once representable, is independent of the section of the family). By \cite[2.2.19]{BBD}, perverse sheaves satisfy \'etale descent, there exist (unique up to unique isomorphism) perverse sheaves $L^{i}$ on $\calB$ such that $\varphi^{*}L^{i}\cong\pH^{i}\tilL$. Clearly, Theorem \ref{th:supp} holds for $L^{i}$ also. It is also easy to see that $L^{i}$ is independent of the choice of $(\tcB,\tils)$.

In the following we will use these perverse sheaves $L^{i}$ when $\pH^{i}L$ is not defined.

\begin{lemma}\label{l:Hsmooth} Under the assumptions (A1)-(A3), we have
\begin{enumerate}
\item The relative Hilbert scheme $\calH_{n}$ is smooth and irreducible for all $n\geq0$;
\item The morphism $f_{n}:\calH_{n}\to\calB$ is flat, with all geometric fibers irreducible of dimension $n$.
\end{enumerate}
\end{lemma}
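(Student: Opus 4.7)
The plan is to establish smoothness of $\calH_n$ first, then deduce the fiber description from Altman--Iarrobino--Kleiman, conclude global irreducibility by a short topological argument, and finally obtain flatness via miracle flatness.

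For smoothness, assumption (A2) handles $0\leq n\leq 2g_{a}-1$ directly (in particular $\calH_0=\calB$ is smooth). For $n\geq 2g_{a}-1$ I would invoke the Altman--Kleiman theorem \cite[8.4(v)]{AK} that the morphism $\calH_n\to\cPic_n$, sending a subscheme to its ideal sheaf, is a Zariski-locally trivial $\PP^{n-g_{a}}$-bundle. Since $\cPic_n$ is only representable once a section of $\pi$ is chosen, I pass via the \'etale descent of \S\ref{ss:nosec} to an \'etale surjection $\tcB\to\calB$ admitting a section $\tils:\tcB\to\calC^{\sm}$. The pulled-back family satisfies (A4), and the pullback of $\cPic_n$ identifies (up to component shift) with $\wt{\cJ}$, which is smooth by Lemma \ref{l:Jsmooth}(1). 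The pullback of $\calH_n$ is then a projective bundle over a smooth scheme and hence smooth, and smoothness descends along the \'etale cover.

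For the fibers, the main theorem of Altman--Iarrobino--Kleiman \cite{AIK} states that for an integral projective curve with planar singularities, the Hilbert scheme of $n$ points is irreducible of dimension $n$; applied to each geometric fiber this proves the fiber statement in (2). For global irreducibility of $\calH_n$, each connected component $Z$ is irreducible by smoothness; for any $b\in f_n(Z)$, $Z\cap f_n^{-1}(b)$ is nonempty and open-and-closed in the irreducible fiber $f_n^{-1}(b)$, hence equals $f_n^{-1}(b)$. Thus $Z=f_n^{-1}(f_n(Z))$, and properness of $f_n$ (the Hilbert scheme of a locally projective family is locally projective) makes $f_n(Z)$ closed in $\calB$. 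The images of the finitely many components then partition $\calB$ into disjoint closed subsets, and connectedness of $\calB$ forces $\calH_n$ to have a single connected component, hence to be irreducible.

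For flatness, $\calH_n$ smooth and irreducible is equidimensional; over the dense open $\calB^{0}\subset\calB$ where $\pi$ is smooth (nonempty by (A3), since $\delta$ vanishes at the generic point), $\calH_n|_{\calB^{0}}=\Sym^n(\calC|_{\calB^{0}}/\calB^{0})$ has dimension $d_{\calB}+n$, so $\dim\calH_n=d_{\calB}+n$. Then $\calH_n$ is Cohen--Macaulay, $\calB$ is regular, all fibers of $f_n$ are equidimensional of dimension $n=\dim\calH_n-\dim\calB$, and miracle flatness yields flatness of $f_n$. The main obstacle is the smoothness assertion for $n\geq 2g_{a}-1$, which requires both the Altman--Kleiman projective bundle structure and the \'etale descent of \S\ref{ss:nosec}; the remaining steps are routine.
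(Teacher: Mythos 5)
Your proposal is correct and follows the paper's overall strategy, but it handles one key step differently. Like the paper, you pass to an \'etale cover to assume (A4), obtain smoothness of $\calH_n$ for $n\geq 2g_a-1$ from the Altman--Kleiman projective bundle $\calH_n\to\cPic_n$ over the smooth $\cJ$ (with (A2) covering the remaining $n$), and descend smoothness. The difference is in the irreducibility of the geometric fibers $\Hilb^n(\calC_b)$. You cite the AIK theorem directly for irreducibility of $\Hilb^n$ of a planar integral curve, which is indeed what their paper proves en route to the compactified Jacobian statement, so this is legitimate. The paper instead cites AIK only for the compactified Jacobian (Lemma~\ref{l:Jsmooth}(2)) and re-derives $\Hilb^n(C)$ irreducibility by a self-contained downward induction: it first bounds every component of $\Hilb^n(C)$ below by dimension $n$ (writing $\Hilb^n(C)$ as the zero locus of a canonical section of a rank-$n$ bundle over the smooth $2n$-dimensional $\Hilb^n(S)$, $S$ a smooth surface containing $C$), notes irreducibility for $n\gg0$ from the $\PP^{n-g_a}$-bundle over $\cJac(C)$, and then uses the quasi-finite ``add one smooth point'' map $\Hilb^n(C)\times C^{\sm}\to\Hilb^{n+1}(C)$ to show any extra component at stage $n_0$ would have dimension $<n_0$, a contradiction. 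Your route is shorter and relies more on the literature; the paper's is self-contained and reuses the same inductive mechanism that reappears in Proposition~\ref{p:supp}. You also spell out the connected-component and miracle-flatness arguments for global irreducibility and flatness of $f_n$, which the paper compresses into the sentence ``the rest of the lemma will follow''; your version of those steps is correct.
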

\begin{proof}
The statements in the lemma are local for the \'etale topology, so we may assume (A4) also holds, hence the compactified Jacobian $\cJ$ exists. The argument and the statement of Lemma \ref{l:Jsmooth}(1) shows that $\calH_{n}$ is smooth over $\cJ$ for $n\geq2g_{a}-1$, hence smooth. For $n\leq2g_{a}-1$, $\calH_{n}$ is smooth by (A1). Hence $\calH_{n}$ is smooth for all $n$.

The rest of the lemma will follow once we show that every geometric fiber $\Hilb^{n}(\calC_{b})$ of $f_{n}$ is irreducible of dimension $n$, for all $n$. 

Let $C=\calC_{b}$, which we may assume to be embedded into a smooth irreducible surface $S$. We first show that every component of $\Hilb^{n}(C)$ has dimension at least $n$. In fact, consider the vector bundle $\calE$ over $\Hilb^{n}(S)$ whose fiber at $Z\in\Hilb^{n}(S)$ is $\Hom_{S}(\calO_{S}(-C),\calO_{Z})$. Then $\calE$ carries a canonical section $s$ given by $\calO_{S}(-C)\subset\calO_{S}\twoheadrightarrow\calO_{Z}$, and $\Hilb^{n}(C)$ is cut out by the vanishing of $s$. Since $\Hilb^{n}(S)$ is smooth of dimension $2n$ and $\rank(\calE)=n$, every component of $\Hilb^{n}(C)$ has dimension at least $n$. It is also clear that the closure of $\Hilb^{n}(C^{\sm})$ in $\Hilb^{n}(C)$ is a component of dimension $n$.

For $n$ large, $\Hilb^{n}(C)$ is a projective space bundle over $\cJac(C)$, which is irreducible by Lemma \ref{l:Jsmooth}(2). Therefore $\Hilb^{n}(C)$ is irreducible of dimension $n$ for large $n$. Suppose $n_{0}$ is the largest number for which $\Hilb^{n_{0}}(C)$ has an extra component $H'$. Consider the quasi-finite map $a:\Hilb^{n}(C)\times C^{\sm}\to\Hilb^{n+1}(C)$ of ``adding one smooth point''. The generic point of $H'$ has support involving singular points of $C$, while the generic point of $\Hilb^{n+1}(C)$ has support in $C^{\sm}$, therefore $a(H'\times C^{\sm})$ is not dense in $\Hilb^{n+1}(C)$. Since $a$ is quasi-finite, $\dim(H'\times C^{\sm})<n+1$, i.e., $\dim(H')<n$. This contradict the lower bound for all components of $\Hilb^{n}(C)$. Therefore $\Hilb^{n}(C)$ is irreducible of dimension $n$.
\end{proof}

\subsection{The Hilbert-Chow map}
For each $b\in\calB$, there is a Zariski open neighborhood $\calB'$ of $b$ over which we can
arrange an embedding $\calC|_{\calB'}\subset \mathbb{P}^N_{\calB'}$. For a generic
choice of linear projection, the induced map $\calC_{b}\rightarrow
\PP^1$ is finite, and will remain so for $\calC_{b'}$ for $b' \in \calB''$, where $\calB''\subset\calB'$ is another some Zariski open neighborhood of $b$. Therefore, Zariski locally on $\calB$, we may make the following assumption:
\begin{enumerate}
\item[(A5)] There is a smooth, connected, projective curve $X$ over $k$ and a finite morphism $\nu:\calC\to X\times\calB$ lifting $\pi$.
\end{enumerate} 
We will assume (A5) in the following discussion.

Consider the morphism
\begin{equation*}
\nu_n:\calH_n\to\Sym^n(\calC/\calB)\to\Sym^n(X)\times\calB.
\end{equation*}
where the first arrow is the Hilbert-Chow map relative to the base $\calB$, and the second map is induced from the finite morphism $\nu:\calC\to X\times\calB$.
which is proper. We understand $\Sym^{0}(X)$ as $\Spec k$.

We recall from \cite[\S 6.2]{GM} the notion of a small map.
\begin{defn}
A proper surjective morphism $f:Y\to X$ between {\em irreducible} schemes over $k$ is called {\em small} if for any $d\geq1$, we have
\begin{equation}\label{eq:cosm}
\codim_{X}\{x\in X|\dim f^{-1}(x)\geq d\}\geq 2d+1.
\end{equation}
\end{defn}
 
\begin{lemma}\label{l:small}
For each $n\in\ZZ_{\geq0}$, the morphism $\nu_{n}$ is small.
\end{lemma}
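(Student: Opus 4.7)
The plan is to show, for each $d \geq 1$, that
\begin{equation*}
W_d := \{y \in \Sym^n(X) \times \calB : \dim \nu_n^{-1}(y) \geq d\}
\end{equation*}
has $\dim W_d \leq n + d_\calB - 2d - 1$, which gives the required $\codim W_d \geq 2d + 1$. Note that both $\calH_n$ and $\Sym^n(X) \times \calB$ are irreducible of dimension $n + d_\calB$ by Lemma \ref{l:Hsmooth}, so the smallness inequality is between varieties of equal dimension.

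First I would describe the fibers. Over a geometric point $(D, b) \in \Sym^n(X) \times \calB$ with $D = \sum_x m_x \cdot x$, a length-$n$ subscheme $Z \subset \calC_b$ satisfying $\nu_{b,*}[Z] = D$ decomposes by support into local pieces $Z_p$ of length $n_p$ indexed by $p \in \nu_b^{-1}(\Supp D)$, subject to $\sum_{p \in \nu_b^{-1}(x)} n_p = m_x$ for each $x$. Set-theoretically,
\begin{equation*}
\nu_n^{-1}(D, b) = \coprod_{\{n_p\}} \prod_{p \in \nu_b^{-1}(\Supp D)} \Hilb^{n_p}(\calC_b, p),
\end{equation*}
where the disjoint union is over admissible distributions $\{n_p\}$ and $\Hilb^{n_p}(\calC_b, p)$ denotes the punctual Hilbert scheme of length-$n_p$ subschemes of $\calC_b$ supported at $p$.

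The key inputs are two dimension bounds on punctual Hilbert schemes of a reduced planar curve singularity $p \in C$. First, by Brian{\c c}on's theorem applied to a local embedding $C \hookrightarrow S$ into a smooth surface, $\Hilb^n(C, p) \subseteq \Hilb^n(S, p)$ and the latter is irreducible of dimension $n - 1$, so $\dim \Hilb^n(C, p) \leq n - 1$ for $n \geq 1$ and $p$ singular; for smooth $p$, $\Hilb^n(C, p)$ is a single point. Second, the $\delta$-bound $\dim \Hilb^n(C, p) \leq \delta_p$ holds for all $n \geq 0$, a classical result for planar singularities (Altman-Iarrobino-Kleiman, via the local compactified Jacobian of dimension $\delta_p$). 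Suppose $\dim \nu_n^{-1}(D, b) \geq d$, and let $N = \sum n_p$ and $s = \#\{p\}$ with both sums running over $p \in \calC_b^{\sing}$ such that $n_p \geq 1$. Summing the first bound gives $N - s \geq d$, hence $N \geq d + 1$ (since $s \geq 1$); summing the second over all singular $p$ gives $\delta(b) \geq d$.

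It follows that $b$ lies in $\calB_{\geq d} := \{b : \delta(b) \geq d\}$, which has $\dim \leq d_\calB - d$ by assumption (A3). Moreover, $N$ is at most the total multiplicity of $D$ at the finite set $\nu_b(\calC_b^{\sing}) \subset X$, so $D$ must concentrate at least $d + 1$ of its $n$ points in this finite set; its free remainder then lies in $\Sym^m(X)$ with $m \leq n - d - 1$, and for fixed $b$ the locus of such $D$ has dimension $\leq n - d - 1$. Combining,
\begin{equation*}
\dim W_d \leq (d_\calB - d) + (n - d - 1) = n + d_\calB - 2d - 1,
\end{equation*}
as required. The main obstacle is establishing the $\delta$-bound $\dim \Hilb^n(C, p) \leq \delta_p$, which requires a careful analysis of colength-$n$ ideals in the Gorenstein local ring $\hatO_{C, p}$ via its normalization; the Brian{\c c}on bound, by contrast, is essentially immediate from the planarity assumption (A1).
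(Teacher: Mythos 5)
Your proof is correct and follows essentially the same route as the paper: bound the dimension of the punctual Hilbert scheme of a locally planar curve germ both by $m-1$ (Iarrobino/Brian\c{c}on, from planarity) and by the local $\delta$-invariant (via the embedding into the local compactified Picard scheme), then combine these with assumption (A3) to get the $2d+1$ codimension estimate. The paper packages the two local bounds as a single $\min\{m-1,\delta\}$ bound and works with a generic point of the locus $Z_d$ rather than with $\dim W_d$ and a projection to $\calB$, but these are only organizational differences; also note that to pass from (A3) to the bound $\dim\calB_{\geq d}\leq d_\calB - d$ you are implicitly using upper semicontinuity of $\delta$ (Teissier), which the paper's direct use of the generic point avoids.
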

\begin{proof}
By Lemma \ref{l:Hsmooth}(1), $\calH_{n}$ is irreducible. The morphism $\nu_{n}$ is clearly proper and surjective.

Fix an integer $d\geq1$. Let $Z_{d}\subset\Sym^{n}(X)\times\calB$ be the closed locus where the fibers of $\nu_{n}$ have dimension at least $d$. Let $\zeta=(D=n_{1}x_{1}+\cdots+n_{r}x_{r},b)$ be a point of $Z_{i}$, with values in some field $F$, over a generic point of $Z_{i}$. Here $x_{i}\in X(F)$ are distinct points and $n_{i}>0$. Consider the Hilbert-Chow map of $\mu_{m}:\Hilb^{m}(\calC_{b})\to\Sym^{m}(\calC_{b})$. Let $y\in\calC_{b}$ be any closed point. Since $\calC_{b}$ is locally planar, $\dim\mu^{-1}_{m}(my)\leq m-1$ by a result of Iarrobino \cite[Corollary 2]{Iarrobino}. On the other hand, $\mu^{-1}_{m}(my)$ classifies length $m$ quotients of $\hatO_{\calC_{b},y}$, which is a subscheme of $\cPic(\hatO_{\calC_{b},y})$, to be defined in \S\ref{ss:local}. Since $\cPic(\hatO_{\calC_{b},y})$ has dimension $\delta(\calC_{b};y)$ (the local $\delta$-invariant of $\calC_{b}$ at $y$), we have $\dim\mu^{-1}_{m}(my)\leq\delta(\calC_{b};y)$. Summarizing,
\begin{equation*}
\dim\mu_{m}^{-1}(my)\leq\min\{m-1,\delta(\calC_{b};y)\}, \textup{for all closed points }y\in \calC_{b}, m\geq1.
\end{equation*}
This implies that
\begin{equation}\label{ineq}
d=\dim\nu_{n}^{-1}(D,b)\leq\sum_{i=1}^{r}\min\{n_{i}-1,\delta(\calC_{b};x_{i})\}.
\end{equation}
Here $\delta(\calC_{b};x)=\sum_{y\in\nu^{-1}(x,b)}\delta(\calC_{b};y)$ for $x\in X(F)$. In particular, $d\leq\delta(\calC_{b})$. Hence, by (A3), 
\begin{equation}\label{codimB}
\codim_{\calB}(\overline{\{b\}})\geq \delta(\calC_{b})\geq d.
\end{equation}

Let $S\subset X\otimes_{k}F$ be the finite subscheme consisting of those $x$ with $\delta(b;x)>0$. The inequality \eqref{ineq} implies that at least $d+1$ of the points in $D$ (counted with multiplicities) which are from $S$. This implies that
\begin{equation}\label{codimX}
\codim_{\Sym^{n}(X)}(\overline{\{D\}})\geq d+1.
\end{equation}
Adding \eqref{codimB} and \eqref{codimX} together we get
\begin{equation*}
\codim_{\Sym^{n}(X)\times\calB}(\overline{\{\zeta\}})\geq 2d+1.
\end{equation*}
This being true for all generic points of $Z_{d}$, we conclude that
\begin{equation*}
\codim_{\Sym^{n}(X)\times\calB}(Z_{d})\geq 2d+1.
\end{equation*}
This verifies the smallness of $\nu_{n}$.
\end{proof}

Let $E_{n}=\bR\nu_{n,*}\Ql\in D^{b}_{c}(\Sym^{n}(X)\times\calB)$. In particular, $E_{0}$ is the constant sheaf on $\calB$. Also $E_{1}=\nu_{*}\Ql$ is a sheaf on $X\times\calB$, and $E_{1}[d_{\calB}+1]$ is a perverse sheaf since $\nu$ is finite.

Let $U_{n}\subset\Sym^{n}(X)$ be the open subscheme consisting of multiplicity-free divisors. Let $\tilU_{n}\subset X^{n}$ be the preimage of $U_{n}$, which is an $S_{n}$-torsor over $U_{n}$. The sheaf $E_{1}^{\boxtimes n}|_{\tilU_{n}\times\calB}$ on $\tilU_{n}\times\calB$ admits an obvious $S_{n}$-equivariant structure and hence descends to a sheaf $\Sym^{n}(E_{1})$ on $U_{n}\times\calB$.

\begin{cor}\label{c:midext}
The complex $E_n[d_{\calB}+n]$ is a perverse sheaf on $\Sym^{n}(X)\times\calB$, and we have a canonical isomorphism
\begin{equation*}
E_{n}[d_{\calB}+n]\cong j_{n,!*}(\Sym^{n}(E_{1})[d_{\calB}+n]).
\end{equation*}
Here $j_{n}:U_{n}\times\calB\hookrightarrow\Sym^{n}(X)\times\calB$ is the open inclusion, and we are implicitly stating that $\Sym^{n}(E_{1})[d_{\calB}+n]$ is a perverse sheaf on $U_{n}\times\calB$.
\end{cor}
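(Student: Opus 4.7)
The plan is to combine the smallness of $\nu_{n}$ (Lemma \ref{l:small}) with the standard consequence of the decomposition theorem for small proper maps with smooth source: if $f\colon Y\to Z$ is small, proper and surjective, with $Y$ smooth irreducible and $Z$ irreducible of the same dimension, then $f_{*}\const{Y}[\dim Y]$ is a semisimple perverse sheaf equal to the middle extension of its restriction to any dense Zariski open of $Z$. Since $\calH_{n}$ is smooth and irreducible of dimension $d_{\calB}+n$ by Lemma \ref{l:Hsmooth}(1), and $\Sym^{n}(X)\times\calB$ is smooth of the same dimension (as $X$ is a smooth curve and $\calB$ is smooth by (A2) for $n=0$), this applies to $\nu_{n}$. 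In particular $E_{n}[d_{\calB}+n]$ is perverse and equals $j_{n,!*}$ of its restriction to $U_{n}\times\calB$.

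It remains to identify $E_{n}|_{U_{n}\times\calB}$ with $\Sym^{n}(E_{1})$. I would check this after the étale base change along $\tilU_{n}\times\calB\to U_{n}\times\calB$. Over $\tilU_{n}\times\calB$, a length-$n$ subscheme $Z\subset\calC_{b}$ with $\nu_{*}Z=x_{1}+\cdots+x_{n}$ (the $x_{i}$ pairwise distinct) must split uniquely as a disjoint sum $Z=y_{1}+\cdots+y_{n}$ with $y_{i}\in\nu^{-1}(x_{i},b)$ a single reduced point. This produces a canonical isomorphism between the pullback of $\calH_{n}|_{U_{n}\times\calB}$ to $\tilU_{n}\times\calB$ and the $n$-fold fiber product $\calC\times_{\calB}\cdots\times_{\calB}\calC$, itself pulled back along $\tilU_{n}\times\calB\hookrightarrow X^{n}\times\calB$. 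Proper base change then identifies the pullback of $E_{n}|_{U_{n}\times\calB}$ with the restriction of $E_{1}^{\boxtimes n}$ to $\tilU_{n}\times\calB$, and the $S_{n}$-equivariant structure descends this to $\Sym^{n}(E_{1})$ on $U_{n}\times\calB$.

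To confirm that $\Sym^{n}(E_{1})[d_{\calB}+n]$ really is perverse (as implicitly asserted in the statement), one checks this after the same étale base change: the pullback is the direct image of the constant sheaf from the smooth scheme $\calH_{n}\times_{U_{n}\times\calB}(\tilU_{n}\times\calB)$ of dimension $d_{\calB}+n$ along a finite morphism, and hence perverse after the indicated shift.

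I expect no serious obstacle beyond the input of Lemma \ref{l:small}: once smallness is in hand, everything is formal, and the bulk of the content has already been extracted in the preceding lemma. The one step that requires a little care is verifying that the fiber-product identification above is canonical and $S_{n}$-equivariant, so that descent produces $\Sym^{n}(E_{1})$ with the correct equivariant structure, but this is standard.
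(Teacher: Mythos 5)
Your proposal is correct and follows the paper's own argument: both use the smallness of $\nu_n$ (Lemma \ref{l:small}) together with the smoothness of $\calH_n$ (Lemma \ref{l:Hsmooth}(1)) and the standard consequence of the decomposition theorem for small maps with smooth source to conclude that $E_n[d_{\calB}+n]$ is perverse and a middle extension from any dense open. The only difference is that the paper dismisses the identification $j_n^*E_n\cong\Sym^n(E_1)$ as "clear," whereas you spell it out via the \'etale base change to $\tilU_n\times\calB$, the fiber-product description, and $S_n$-descent --- a sound elaboration of the same step, not a different route.
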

\begin{proof}
By Lemma \ref{l:small}, the morphism $\nu_{n}$ is small. Since $\calH_{n}$ is smooth for all $n$ by Lemma \ref{l:Hsmooth}(1), the complex $\bR\nu_{n,*}\Ql[d_{\calB}+n]=E_{n}[d_{\calB}+n]$ is perverse, and is the middle extension of its restriction to any open dense subset (in particular $U_{n}\times\calB$) of $\Sym^{n}(X)\times\calB$. Clearly $j_{n}^{*}E_{n}=\Sym^{n}(E_{1})$, hence $E_{n}[d_{\calB}+n]$ is the middle extension of the perverse sheaf $\Sym^{n}(E_{1})[d_{\calB}+n]$. 
\end{proof}

\subsection{The shift operator}
Consider the following diagram
\begin{equation}\label{T}
\xymatrix{& \calT_n\ar[dl]_{\oleft{t}}\ar[dr]^{\oright{t}} \\
\calH_n\times X\ar[d]^{\nu_{n}\times\id_{X}} & & \calH_{n+1}\ar[d]^{\nu_{n+1}}\\
\Sym^{n}(X)\times X\times \calB\ar[rr]^{\sigma_{n}} & & \Sym^{n+1}(X)\times\calB}
\end{equation}
For any $k$-algebra $R$, $\calT_n(R)$ classifies the data $(b,x,\calI'\subset\calI\subset\calO_{\calC_b})$ where $(b,x)\in \calB(R)\times X(R)$ such that $\calI/\calI'$ is an invertible $R$-module supported over the graph of $x$. The morphism $\oleft{t}$ (resp. $\oright{t}$) sends $(b,x,\calI'\subset\calI)$ to $(b,x,\calI')$ (resp. $(b,x,\calI)$). The morphism $\sigma_{n}$ sends $(D,x,b)$ to $(D+x,b)$.

The restriction of $\calT_{n}$ to the generic point $\eta\in\calB$ can be identified with
\begin{equation}\label{genericT}
\xymatrix{& \Sym^{n}(\calC_{\eta})\times\calC_{\eta}\ar[dl]_{\oleft{t}}\ar[dr]^{\oright{t}} \\
\Sym^{n}(\calC_{\eta})\times X & & \Sym^{n+1}(\calC_{\eta})}
\end{equation}

Let $[\calT_{n}]\in\HBM{2(d_{\calB}+n+1)}{\calT_{n}}=\cohog{0}{\calT_{n},\oright{t}^{!}\const{\calH_{n+1}}}$ be the class of the closure of the generic fiber $\calT_{n,b}$. Here we used the fact that $\calH_{n+1}$ of dimension $d_{\calB}+n+1$ to identify $\DD_{\calT_{n}}[-2(d_{\calB}+n+1)](-d_{\calB}-n-1)$ with $\oright{t}^{!}\const{\calH_{n+1}}$. By the formalism of cohomological correspondences \cite{SGA5} or \cite[Appendix A.1]{GS}, $[\calT_{n}]$ can be viewed as a cohomological correspondence between the constant sheaves on $\calH_{n}\times X$ and $\calH_{n+1}$ with support on $\calT_{n}$:
\begin{equation*}
[\calT_{n}]:\oleft{t}^{*}\const{\calH_{n}\times X}\to\oright{t}^{!}\const{\calH_{n+1}}.
\end{equation*}
It induces a map
\begin{equation*}
\tilT_n=[\calT_{n}]_{\#}:\sigma_{n,*}(E_n\boxtimes\const{X})\to E_{n+1}
\end{equation*}
between shifted perverse sheaves on $\Sym^{n+1}(X)\times\calB$. Let
\begin{equation*}
K_n=\bR f_{n,*}\Ql\in D^b_c(\calB).
\end{equation*}
Taking the direct image of $\tilT_{n}$ under $\bR f_{n+1,*}$, we get
\begin{equation*}
T_{n}=\bR f_{n+1,*}(\tilT_{n}):K_{n}\otimes\cohog{*}{X}=\bR f_{n+1,*}\sigma_{n,*}(E_{n}\boxtimes\const{X})\to\bR f_{n+1,*}E_{n+1}=K_{n+1}
\end{equation*}

\begin{prop}\label{p:supp} Under (A1)-(A3), for each $n\geq0,i\in\ZZ$ and every simple constituent $M$ of $\pH^i K_n$, the support of $M$ is the whole of $\calB$.
\end{prop}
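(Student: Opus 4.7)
The plan is to prove the proposition by descending induction on $n$. Since the property of having only full-support constituents is \'etale-local on $\calB$, I may assume (A4) by the discussion in \S\ref{ss:nosec}, and shrink $\calB$ Zariski-locally to assume (A5). For the base case $n \geq 2g_a - 1$, the Abel--Jacobi morphism $\calH_n \to \cJ$ is a Zariski-locally trivial $\PP^{n-g_a}$-bundle by \cite[Theorem 8.4(v)]{AK}, hence $K_n \cong \bigoplus_{j=0}^{n-g_a} L[-2j](-j)$, so each $\pH^i K_n$ is a sum of Tate-twisted shifts of various $\pH^{i'} L$. Theorem \ref{th:supp} then forces every simple constituent of $\pH^i K_n$ to have full support $\calB$.

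For the inductive step, suppose the conclusion holds for $K_{n+1}$. My goal is to show that the shift operator $T_n \colon K_n \otimes \cohog{*}{X} \to K_{n+1}$ induces an injection on each perverse cohomology sheaf: if so, then every simple constituent of $\pH^i(K_n \otimes \cohog{*}{X})$ embeds into $\pH^i K_{n+1}$ and hence has full support by the induction hypothesis, and taking the $\cohog{0}{X} = \Ql$ summand of the K\"unneth decomposition gives the conclusion for $\pH^i K_n$. To prove the injectivity I would construct a transpose operator $T_n^t \colon K_{n+1} \to K_n \otimes \cohog{*}{X}$ (up to shifts and Tate twists) by Verdier-dualizing the cohomological correspondence $[\calT_n]$, using the smoothness of $\calH_n$ and $\calH_{n+1}$ (via (A2) and Lemma \ref{l:Hsmooth}), the properness of $f_n, f_{n+1}$, and the Poincar\'e self-duality of $\cohog{*}{X}$. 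Then I would analyze the composition $T_n^t \circ T_n$, which is the endomorphism of $K_n \otimes \cohog{*}{X}$ induced by the self-correspondence $\calT_n \times_{\calH_{n+1}} \calT_n \rightrightarrows \calH_n \times X$. Over the generic point $\eta \in \calB$ this fibre product decomposes into a diagonal component (contributing multiplication by $\deg(\oright{t}|_\eta) = n+1$) and off-diagonal components, and the composition should act on each $\pH^i$ as $(n+1) \cdot \id$ plus a ``lower-order'' term with respect to a suitable filtration, hence invertibly.

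The main obstacle is the explicit analysis of $\calT_n \times_{\calH_{n+1}} \calT_n$ and the resulting cohomological correspondence. A computation over $\eta$ shows that the off-diagonal locus, parametrizing triples $(D, x_1, x_2)$ with $x_2 \in \Supp(D)$ and $x_1 \neq x_2$ (so that $D + x_1 = (D + x_1 - x_2) + x_2$ as divisors on $\calC_\eta$), has generically the same dimension as the diagonal, so the ``diagonal dominates'' heuristic is subtle. One must identify a filtration---by the number of coincidences, by the perverse degree, or by codimension of support in $\calB$---on which the off-diagonal contribution acts strictly lower-order while the diagonal contributes $(n+1) \cdot \id$ on graded pieces. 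Assumption (A3) provides the codimension bounds via the $\delta$-invariant, (A2) supplies the smoothness needed for Verdier duality and the self-duality of $K_n$ up to shifts, and the characteristic hypothesis ensures that $n+1$ is invertible and the decomposition theorem is available. With these ingredients, $T_n^t \circ T_n$ becomes a unipotent-times-unit operator on each $\pH^i(K_n \otimes \cohog{*}{X})$, hence an isomorphism, completing the descending induction.
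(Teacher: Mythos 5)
Your overall architecture matches the paper's: descending induction on $n$, the same base case ($n\geq 2g_a-1$ via the projective bundle $\calH_n\to\cPic_n$ and Theorem~\ref{th:supp}), and the shift operator $T_n$ as the vehicle for the inductive step. The divergence, and the genuine gap, is in how you try to conclude from $T_n$ that constituents of $\pH^\ast K_n$ reappear in $\pH^\ast K_{n+1}$.

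You propose to form a transpose $T_n^t$ and argue that $T_n^t\circ T_n = (n+1)\cdot\id + (\text{lower order})$ on each $\pH^i$. This is where the argument breaks. First, the ``eigenvalue'' you get from the diagonal of $\calT_n\times_{\calH_{n+1}}\calT_n$ is not uniformly $n+1$: after decomposing $E_1 = \const{X\times\calB}\oplus V$, hence $E_n = \bigoplus_{j} W_j^n$ via the binomial expansion of $\Sym^n(E_1)$, the composition acts on the $W_j$-piece with multiplicity $n+1-j$, which ranges from $n+1$ down to $1$. Second, even granting such a formula, invertibility of these integers in $k$ is not part of (A1)--(A3); Proposition~\ref{p:supp} is stated without any characteristic hypothesis and the paper's proof indeed uses none. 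Third, and most importantly, you explicitly flag that the off-diagonal components of $\calT_n\times_{\calH_{n+1}}\calT_n$ have the same dimension as the diagonal and that you do not have a filtration which makes them ``strictly lower order''; without that input, the claim that $T_n^t\circ T_n$ is a unipotent-times-unit operator is unsubstantiated, and I do not see how to repair it along these lines.

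The paper sidesteps the whole issue of inverting $T_n^t\circ T_n$. The key ingredient you are missing is the smallness of $\nu_n:\calH_n\to\Sym^n(X)\times\calB$ (Lemma~\ref{l:small}), which relies on (A3) together with Iarrobino's bound and the dimension of punctual $\cPic$. Smallness plus smoothness of $\calH_n$ gives Corollary~\ref{c:midext}: $E_n[d_\calB+n]$ is the middle extension of $\Sym^n(E_1)[d_\calB+n]$ from the multiplicity-free locus. One then decomposes $E_n=\bigoplus_j W_j^n$ with $W_j^n=j_{n,!*}V_j^n$, computes $\tilT_n$ purely over the generic point of $\Sym^{n+1}(X)\times\calB$, and finds $\sigma_{n,*}(W_j^n\boxtimes\Ql)\cong (W_j^{n+1})^{\oplus n+1-j}$, with $\tilT_n$ the sum map. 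At that point one does not need $\tilT_n$ to be injective at all: every simple constituent of the source $\sigma_{n,*}(E_n\boxtimes\Ql)=\bigoplus_j(W_j^{n+1})^{\oplus n+1-j}$ is manifestly a constituent of $E_{n+1}=\bigoplus_j W_j^{n+1}$, so applying $\bR f_{n+1,\ast}$ transfers the induction hypothesis from $K_{n+1}$ to $K_n\otimes\cohog{\ast}{X}$ and hence to $K_n$. In short: the paper replaces your attempted ``$T_n^t\circ T_n$ is invertible'' by ``both sides have the same building blocks, as one sees over a dense open, and middle extension rigidifies the decomposition.'' That is the idea your proposal lacks.
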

\begin{proof}
The property of $K_{n}$ to be proved is local for the \'etale topology of $\calB$, hence we may assume (A4) and (A5) hold. 

For $n\geq2g_{a}-1$, $h_n:\calH_n\to\cPic_n$ is a projective space bundle, the proposition is an easy consequence of Theorem \ref{th:supp}. In fact, the relative ample line bundle $\calO(1)$ along the fibers of $h_n$ gives a decomposition
\begin{equation*}
\bR h_{n,*}K_n\cong\bigoplus_{j=0}^{n-g_{a}}\Ql[-2j](-j).
\end{equation*}
Hence
\begin{equation*}
\pH^iK_n\cong\bigoplus_{j=0}^{n-g_{a}}\pH^{i-2j}L(-j).
\end{equation*}
In particular, any simple constituent $M$ of $\pH^{*}K_{n}$ is also a simple constituent of $\pH^{*}L$. By Theorem \ref{th:supp}, the support of $M$ is the whole of $\calB$.

Now we apply backward induction to $n$. Assuming the statement is true for any simple constituent of $\pH^{*}K_{n+1}$, we would like to deduce that the same is true for any simple constituent $M$ of $\pH^{*}K_{n}$. The idea is to show that $M$ appears as a direct summand of $\pH^{*}K_{n+1}$ via the map $T_{n}$. 

The sheaf $E_{1}=\nu_{*}\const{\calC}$ on $X\times\calB$ contains the constant sheaf as a direct summand. Fix a decomposition
\begin{equation*}
E_{1}=\const{X\times\calB}\oplus V
\end{equation*}
where $V$ is a sheaf on $X\times\calB$. Then we can write
\begin{equation*}
\Sym^{n}(E_{1})=\bigoplus_{i=0}^{n}V^{n}_{i}
\end{equation*}
such that $V^{n}_{i}|_{\tilU_{n}\times\calB}$ is the sum of direct summands of $E_{1}^{\boxtimes n}=(\Ql\oplus V)^{\boxtimes n}$ (under the binomial expansion) consisting of exactly $i$-factors of $V$. Let
\begin{equation*}
W^{n}_{i}=j_{n,!*}V^{n}_{i}
\end{equation*}
Then by Corollary \ref{c:midext},
\begin{equation}\label{binom}
E_{n}=\bigoplus_{i=0}^{n}W^{n}_{i}.
\end{equation}

We would like to understand the effect of the map $\tilT_{n}$ under the ``binomial expansion'' \eqref{binom}. Base change the diagram \eqref{T} to the generic point $(x_{1},\cdots,x_{n+1},\eta)\in \tilU_{n+1}\times\calB$, using the diagram \eqref{genericT}, we get
\begin{equation*}
\xymatrix{ &\bigsqcup_{i=1}^{n+1}\Gamma(p_{i})\ar[dl]_{\oleft{t}}\ar[dr]^{\oright{t}}\\
\bigsqcup_{i=1}^{n+1}\prod_{j\neq i}\nu^{-1}(x_{j},\eta)\times\{x_{i}\} & & \prod_{j=1}^{n+1}\nu^{-1}(x_{j},\eta)}
\end{equation*}
where $\Gamma(p_{i})$ is the graph of the natural projection:
\begin{equation*}
p_{i}:\prod_{j=1}^{n+1}\nu^{-1}(x_{j},\eta)\to\prod_{j\neq i}\nu^{-1}(x_{j},\eta)\times\{x_{i}\}.
\end{equation*}
This implies that the fiber of $\tilT_{n}$ at the point $(D=x_{1}+x_{2}+\cdots+x_{n+1},\eta)\in U_{n+1}\times\calB$ takes the form
\begin{equation*}
\tilT_{n}|_{(D,\eta)}=\bigoplus_{i=1}^{n+1}p_{i}^{*}:\bigoplus_{i=1}^{n+1}\bigotimes_{j\neq i}E_{1,x_{j}}\to\bigotimes_{j=1}^{n+1}E_{1,x_{j}}.
\end{equation*}
The pullback $p_{i}^{*}$ is the identity on the factor $E_{1,x_{j}}$ for $j\neq i$, and is the inclusion of the factor $\Ql$ into $E_{1,x_{i}}$ at the $i$-th factor. Using the expansion \eqref{binom}, we can rewrite $\tilT_{n}$ at $(D,\eta)$ as
\begin{equation*}
\bigoplus_{j=1}^{n}\varphi_{j}:\bigoplus_{j=1}^{n}\bigoplus_{i=1}^{n+1}V^{n}_{j,D-x_{i}}\to\bigoplus_{j=1}^{n}V^{n+1}_{j,D}\subset\bigoplus_{j=1}^{n+1}V^{n+1}_{j,D}
\end{equation*}
where the map $\varphi_{j}:\oplus_{i=1}^{n+1}V^{n}_{j,D-x_{i}}\to V^{n+1}_{j,D}$ can be understood the in the following way. In the following we omit the superscript of $V_{j,D}$ because it will be clear from the degree of $D$. By definition, we have
\begin{equation*}
V_{j,D}=\bigoplus_{D'\subset D,\deg(D')=j}\bigotimes_{x'\in D'}V_{x'},
\end{equation*}
hence
\begin{equation*}
\bigoplus_{i=1}^{n+1}V_{j,D-x_{i}}=\bigoplus_{D'\subset D,\deg(D')=j}\left(\bigoplus_{i,x_{i}\notin D'}\bigotimes_{x'\in D'}V_{x'}\right)=V_{j,D}^{\oplus n+1-j}.
\end{equation*}
The map $\varphi_{j}$ can be identified with
\begin{equation*}
\id^{\oplus n+1-j}:V_{j,D}^{\oplus n+1-j}\to V_{j,D}.
\end{equation*}

Both the source and the target of $\tilT_{n}$ are middle extension perverse sheaves (up to the shift $[d_{\calB}+n+1]$) from any open dense subset of $\Sym^{n+1}(X)\times\calB$ (the source being so because $\sigma_{n}$ is finite). The above calculation on the generic point implies
\begin{equation*}
\sigma_{n,*}(W^{n}_{j}\boxtimes\const{X})\cong (W^{n+1}_{j})^{\oplus n+1-j},
\end{equation*}
and the map $\tilT_{n}$ can be written as
\begin{equation*}
\bigoplus_{j=1}^{n}\id^{\oplus n+1-j}:\bigoplus_{j=1}^{n}(W^{n+1}_{j})^{\oplus n+1-j}\to\bigoplus_{j=1}^{n}W^{n+1}_{j}\subset\bigoplus_{j=1}^{n+1}W^{n+1}_{j}
\end{equation*}
In particular, every direct summand of $\sigma_{n,*}(E_{n}\boxtimes\Ql)$ appears as a direct summand of $E_{n+1}$. Applying $\bR f_{n+1,*}$, we conclude that all simple constituents of $\pH^{*}(K_{n}\otimes\cohog{*}{X})$ (which are the same as simple constituents of $\pH^{*}K_{n}$) appears in $\pH^{*}K_{n+1}$. This finishes the induction step.
\end{proof}

\begin{theorem}\label{th:pervmac}
Let $\pi:\calC\to\calB$ be a projective flat family of curves satisfying (A1)-(A3) in \S\ref{ss:assumption}. Let $f_n:\calH_n\to\calB$ be the relative Hilbert scheme of $n$-points on the fibers of $\pi$. Let $L^{i}$ be the perverse sheaves defined in \S\ref{ss:nosec} (which are the descent of the perverse cohomology sheaves of the family of compactified Jacobians). Then for any $n\geq0$ and $i\in\ZZ$, there is a canonical isomorphism
\begin{equation}\label{pervmac}
\pH^{i+d_{\calB}}\bR f_{n,*}\Ql\cong\bigoplus_{\max\{i-n,0\}\leq j\leq i/2}L^{i+d_{\calB}-2j}(-j).
\end{equation}
\end{theorem}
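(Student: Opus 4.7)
The plan is to exhibit both sides of \eqref{pervmac} as semisimple perverse sheaves on $\calB$ whose simple summands all have full support $\calB$, and to identify them via restriction to the open locus $\calB^{\sm}\subset\calB$ over which $\pi$ is smooth, where Macdonald's classical formula \eqref{Mac} applies fiberwise.

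\emph{Step 1.} Since $\calH_{n}$ is smooth by Lemma \ref{l:Hsmooth}(1) and $f_{n}$ is projective, the decomposition theorem implies that $\pH^{i+d_{\calB}}\bR f_{n,*}\Ql$ is a semisimple perverse sheaf, and Proposition \ref{p:supp} ensures every simple summand has full support $\calB$. For the right-hand side, each $L^{m}$ is semisimple with simple summands of full support by Theorem \ref{th:supp} together with the descent construction of \S\ref{ss:nosec}. A semisimple perverse sheaf on $\calB$ all of whose simple constituents have support $\calB$ is canonically isomorphic to the intermediate extension of its restriction to any open dense $U\subset\calB$; it therefore suffices to produce a canonical isomorphism of the form \eqref{pervmac} over one such $U$.

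\emph{Step 2.} Take $U=\calB^{\sm}$, the locus over which $\pi$ is smooth; it is nonempty by (A3). Over $\calB^{\sm}$ one has $\calH_{n}|_{\calB^{\sm}}=\Sym^{n}(\calC|_{\calB^{\sm}}/\calB^{\sm})$, and $\cJ|_{\calB^{\sm}}$ is the Jacobian abelian scheme. Macdonald's formula applied fiberwise, combined with the degeneration of the Leray spectral sequence for the smooth projective morphism $f_{n}|_{\calB^{\sm}}$, gives the canonical decomposition
\begin{equation*}
\bR f_{n,*}\Ql|_{\calB^{\sm}}\;\cong\;\bigoplus_{a+b\leq n,\,a,b\geq 0}\wedge^{a}R^{1}\pi_{*}\Ql\,[-a-2b](-b).
\end{equation*}
Since $\cJ|_{\calB^{\sm}}$ is an abelian scheme with $R^{1}p_{*}\Ql\cong R^{1}\pi_{*}\Ql$, we have $L|_{\calB^{\sm}}\cong\bigoplus_{k}\wedge^{k}R^{1}\pi_{*}\Ql\,[-k]$, and consequently $L^{m}|_{\calB^{\sm}}\cong\wedge^{m-d_{\calB}}R^{1}\pi_{*}\Ql\,[d_{\calB}]$. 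Taking $\pH^{i+d_{\calB}}$ of the first display collects the summands with $a+2b=i$; substituting $a=i-2j$ and $b=j$, the constraint $a+b\leq n$ becomes $j\geq i-n$ while $a,b\geq 0$ becomes $0\leq j\leq i/2$, matching the right-hand side of \eqref{pervmac} term by term.

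\emph{Step 3.} Applying the intermediate extension functor for the open inclusion $\calB^{\sm}\hookrightarrow\calB$ to the canonical isomorphism produced in Step 2, and using Step 1 to identify both sides of \eqref{pervmac} with their own intermediate extensions from $\calB^{\sm}$, yields the desired canonical isomorphism on all of $\calB$. The essential obstacle is Proposition \ref{p:supp}: once the support theorem reduces the problem to the smooth open locus, the remainder is a relative version of the classical Macdonald formula together with the standard cohomology of abelian varieties.
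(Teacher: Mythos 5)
Your proposal is correct and follows essentially the same strategy as the paper's proof: use Proposition \ref{p:supp} and Theorem \ref{th:supp} to recognize both sides of \eqref{pervmac} as middle extensions from the smooth locus (the paper restricts to the generic point $\eta$ rather than all of $\calB^{\sm}$, but this is the same idea), then invoke the canonical classical Macdonald isomorphism there. Your Step 2 merely spells out the index bookkeeping that the paper leaves implicit.
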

\begin{proof}
By Proposition \ref{p:supp} and Theorem \ref{th:supp}, both sides of \eqref{pervmac} are middle extension perverse sheaves on $\calB$. Therefore, it suffices to establish a canonical isomorphism in the form of \eqref{pervmac} on the open dense subset $\calB_{0}\subset\calB$ consisting of smooth fibers, or even at the generic point $\eta$ of $\calB$, i.e., a $\Gal(\overline{k(\eta)}/k(\eta))$-equivariant isomorphism
\begin{equation}\label{eta}
\cohog{i}{\Sym^n(\calC_{\bareta})}\cong\bigoplus_{\max\{i-n,0\}\leq j\leq i/2}L_{\bareta}^{i+d_{\calB}-2j}[-i-d_{\calB}](-j)
\end{equation}
where $\bareta$ is the geometric generic point above $\eta$. Since $L^{i+d_{\calB}}_{\bareta}[-i-d_{\calB}]$ is canonically $\cohog{i}{\Jac(\calC_{\bareta})}\cong\wedge^{i}\cohog{1}{\calC_{\bareta}}$, \eqref{eta} follows from the classical Macdonald formula \eqref{Mac} for $\calC_{\bareta}$ (which is $\Gal(\overline{k(\eta)}/k(\eta))$-equivariant and canonical). 
\end{proof}

\subsection{The perverse filtration}\label{ss:fil}
Suppose a family of curves $\calC\to\calB$ is such that $\calB$ is irreducible, and $\cJ=\cJac(\calC/\calB)$ is defined and smooth. For each geometric point $b\in\calB$, we get a {\em perverse filtration} on the total cohomology $\cohog{*}{\cJac(\calC_{b})}=\cohog{*}{\cJ_{b}}$. This is the increasing filtration defined as
\begin{equation*}
P_{\leq i}\cohog{*}{\cJ_{b}}:=(\leftexp{p}{\tau}_{\leq i+d_{\calB}}L)_{b}[-d_{\calB}].
\end{equation*}
Note that $\leftexp{p}{\tau}_{\leq i+d_{\calB}}L\to L$ is a direct summand by the decomposition \eqref{decompL}, hence $P_{\leq i}\cohog{*}{\cJ_{b}}$ defined above is indeed a subspace of $\cohog{*}{\cJ_{b}}$.

\begin{prop}\label{p:canonfil}
Let $\calC'\to\calB'$ be a family of curves satisfying (A1)-(A4), which is obtained from the family $\calC\to\calB$ by a base change $\varphi:\calB'\to\calB$. Assume $\calB$ is irreducible, $\cJ=\cJac(\calC/\calB)$ is defined and is smooth. Then for every geometric fiber $b'\in\calB'$ with image $b\in\calB$, the perverse filtration $P_{\leq i}$ on $\cohog{*}{\cJ_{b}}$ is the same as the perverse filtration $P'_{\leq i'}$ on $\cohog{*}{\cJ_{b'}}$ (under the identification $\cJ_{b}=\cJ_{b'}$). 
\end{prop}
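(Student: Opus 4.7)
The plan is to prove the proposition by first handling the case when $\varphi$ is smooth (where the result is a direct consequence of $t$-exactness), and then reducing the general case to this one via an auxiliary family.

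For the smooth case, begin by invoking proper base change: since $p\colon\cJ\to\calB$ is proper and $\cJ'\cong\cJ\times_\calB\calB'$ canonically (the compactified Picard scheme is intrinsic once representable, so the section on $\calB'$ is immaterial), we get $L'\cong\varphi^*L$ in $D^b_c(\calB')$. If $\varphi$ is smooth of relative dimension $r=d_{\calB'}-d_\calB$, then $\varphi^*[r]$ is perverse $t$-exact, so it commutes with perverse truncation. Combined with the elementary identity $\leftexp{p}{\tau}_{\leq k}(M[r])=(\leftexp{p}{\tau}_{\leq k+r}M)[r]$, this yields
\[
\leftexp{p}{\tau}_{\leq i+d_{\calB'}}L'\;=\;\varphi^*\bigl(\leftexp{p}{\tau}_{\leq i+d_\calB}L\bigr).
\]
Taking the fiber at $b'$ and using $(\varphi^*K)_{b'}=K_b$ then identifies the two filtered complexes, and hence $P_{\leq i}$ and $P'_{\leq i}$ on $\cohog{*}{\cJ_b}=\cohog{*}{\cJ'_{b'}}$ with matching index.

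For the general case, the plan is to reduce to the smooth situation by interpolating through an auxiliary family. The perverse filtrations at $b$ and $b'$ depend only on étale neighborhoods of these points, since $\leftexp{p}{\tau}_{\leq k}$ commutes with étale pullback. After passing to sufficiently small étale neighborhoods, one seeks a third family $\calC^\sharp\to\calB^\sharp$ satisfying (A1)-(A4), together with smooth morphisms $\alpha\colon\calB\to\calB^\sharp$ and $\beta\colon\calB'\to\calB^\sharp$ such that $\beta=\alpha\circ\varphi$ and $\calC,\calC'$ are pulled back from $\calC^\sharp$ via $\alpha,\beta$. Applying the smooth case to both $\alpha$ and $\beta$ then identifies both $P_{\leq i}$ and $P'_{\leq i'}$ with a common perverse filtration induced by $\calC^\sharp$, finishing the proof.

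The main obstacle lies in constructing this auxiliary family $\calC^\sharp\to\calB^\sharp$. A natural candidate, which is also needed for the proof of Theorem \ref{th:main} itself, is the universal family in a sufficiently ample complete linear system $|H|$ on a smooth projective surface $S$ in which $\calC_b$ embeds (possible because $\calC_b$ has planar singularities). Conditions (A1)-(A3) for this family follow from classical properties of Severi varieties and the unobstructedness of deformations of planar curve singularities, while (A4) can be arranged after an étale cover. Smoothness of the classifying maps $\alpha,\beta$ comes from versality of $|H|$ once embeddings $\calC\hookrightarrow S\times\calB$ and $\calC'\hookrightarrow S\times\calB'$ as in (A5) are chosen compatibly on our étale neighborhoods.
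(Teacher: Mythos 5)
Your treatment of the smooth case is correct and clean: $\varphi^{*}[r]$ is perverse $t$-exact, so it commutes with ${}^p\tau_{\leq}$, and the shift bookkeeping matches the normalization $P_{\leq i}={}^p\tau_{\leq i+d_{\calB}}$. The problem is the reduction of the general case to this one.

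The reduction hinges on producing, \'etale-locally near $b$ and $b'$, a family $\calC^{\sharp}\to\calB^{\sharp}$ together with \emph{smooth} classifying morphisms $\alpha\colon\calB\to\calB^{\sharp}$ and $\beta\colon\calB'\to\calB^{\sharp}$ pulling back $\calC^{\sharp}$ to $\calC$ and $\calC'$. You assert that smoothness of $\alpha$ and $\beta$ ``comes from versality of $|H|$.'' This inverts the logic of versality: versality of the target guarantees that a classifying morphism \emph{exists} after shrinking, not that it is smooth. A classifying map $\calB\to\calB^{\sharp}$ is smooth precisely when the Kodaira--Spencer map $T_{b}\calB\to H^{0}(C,N_{C/S})$ is surjective, which is nothing like a consequence of (A1)--(A4). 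For instance, take $\calC\to\calB$ a one-parameter family over a smooth curve $\calB$ with generically smooth fibers and a single nodal fiber over $b_{0}$, $\delta(b_{0})=1$ (so (A1)--(A4) hold), and take $\varphi\colon\calB'\hookrightarrow\calB$ any further base change; then neither $\calB$ nor $\calB'$ maps smoothly to $|H|$, whose dimension is large. Compare with the paper's proof of Proposition~\ref{p:versal}, where smoothness of the analogous map \eqref{localDef} is deduced from versality of \emph{both} source and target --- a symmetric hypothesis your $\calB,\calB'$ do not satisfy in general. So the factorization through a common smooth source is simply unavailable, and the general case is left unproved.

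The paper sidesteps the need for smoothness of $\varphi$ entirely. Proper base change gives $\varphi^{*}L\cong L'$. Applying Proposition~\ref{p:supp} (via Theorem~\ref{th:supp}) to the family $\calC'\to\calB'$ shows every simple constituent of $\pH^{i}L'$ --- and hence of every $\pH^{k}(\varphi^{*}\pH^{i}L)$ --- has full support $\calB'$. Since $\varphi^{*}\pH^{i}L$ restricted to the dense open smooth locus $\calB'_{0}$ is a shifted local system concentrated in a single perverse degree, the support condition forces $\varphi^{*}\pH^{i}L$ to be (a shift of) a perverse sheaf on all of $\calB'$, indeed the middle extension of its restriction to $\calB'_{0}$. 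The same holds for $\pH^{i'}L'$, and the two agree on $\calB'_{0}$, hence agree globally. In other words, the support theorem supplies exactly the $t$-exactness-like control that you were hoping to extract from smoothness of $\varphi$, but without that hypothesis. If you want to salvage your strategy, this is the missing input: replace ``$\varphi$ is smooth'' by ``$\varphi^{*}$ of each $\pH^{i}L$ is perverse up to shift,'' and derive the latter from the support theorem on the target family.
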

\begin{proof}
Let $L\in D^{b}_{c}(\calB)$ and $L'\in D^{b}_{c}(\calB')$ be the direct image complex of $\cJ$ and $\cJ'$. The (non-canonical) decomposition \eqref{decompL} applies to both $L$ and $L'$. By proper base change, we have
\begin{equation*}
\bigoplus_{i}\varphi^{*}\pH^{i}L[-i]\cong\varphi^{*}L=L'\cong\bigoplus_{i}\pH^{i}L'[-i].
\end{equation*}
To prove the Proposition, it suffices to argue that $\varphi^{*}\pH^{i+\dim\calB}L\cong\pH^{i+\dim\calB'}L'$. Applying Proposition \ref{p:supp} to the family $\calC'\to\calB'$, every simple constituent of $\pH^{i}L'$ has support equal to $\calB'$, hence every simple constituent of $\pH^{j}\varphi^{*}\pH^{i}L'$ also has support $\calB'$.

Let $\calB'_{0}$ (resp. $\calB_{0}$) be the locus where the fiber curves are smooth, and let $\varphi_{0}:\calB_{0}'\to\calB_{0}$ be the restriction of $\varphi$. Then $\pH^{i+\dim\calB}L|_{\calB_{0}}$ and $\pH^{i+\dim\calB'}L'|_{\calB'_{0}}$ are both lisse and hence $\varphi_{0}^{*}(\pH^{i+\dim\calB}L|_{\calB_{0}})\cong\pH^{i+\dim\calB'}L'|_{\calB'_{0}}$. By the support property of $\varphi^{*}\pH^{i}L$ stated above, $\varphi^{*}\pH^{i}L$ must also be perverse, and it is the middle extension of its restriction to $\calB_{0}'$. In particular, we conclude $\varphi^{*}\pH^{i+\dim\calB}L\cong\pH^{i+\dim\calB'}L'$. This implies the Proposition.
\end{proof}

\subsection{Perverse filtration vs. Lefschetz filtration}\label{ss:pervLef}
We first recall the definition of the determinant line bundle of the family of compactified Jacobians $\cJ$. Let $\calF^{\univ}$ be the universal object over $\cJ\times_{\calB}\calC$. Let $\pr_{\cJ}:\cJ\times_{\calB}\calC\to\cJ$ be the projection. Let
\begin{equation*}
\calL_{\det}:=\det(\bR\pr_{\cJ,*}\calF^{\univ}),
\end{equation*}
which is a line bundle over $\cJ$. The iterated cup product by $c_{1}(\calL_{\det})$ induces a map:
\begin{equation}\label{c1i}
c_{1}(\calL_{\det})^{g_{a}-i}:\pH^{d_{\calB}+i}L\to\pH^{d_{\calB}+2g_{a}-i}L(g_{a}-i).
\end{equation}
Let $\calB_{0}\subset\calB$ be the locus where $\calC_{b}$ is smooth. It is well-known that $\calL_{\det}$ is ample when restricted to $\cJ_{b}$ for $b\in\calB_{0}$. Therefore, by the relative hard Lefschetz theorem \cite[Th\'eor\`eme 5.4.10, 6.2.10]{BBD}, the map $c_{1}(\calL_{\det})^{i}$ is an isomorphism over $\calB_{0}$, for $0\leq i\leq g_{a}$. By Proposition \ref{p:supp}, $\pH^{i}L$ is the middle extension of $\pH^{i}L|_{\calB_{0}}$ for any $i$, hence \eqref{c1i} is an isomorphism over the whole $\calB$. 

For each geometric point $b\in\calB$, we consider the Jacobson-Morozov filtration induced by the nilpotent action
\begin{equation*}
\cup c_{1}(\calL_{\det}):\cohog{*}{\cJ_{b}}\to\cohog{*}{\cJ_{b}}.
\end{equation*}
This is the unique increasing filtration $M_{\leq i}\cohog{*}{\cJ_{b}}$ such that $c_{1}(\calL_{\det})M_{\leq i}\subset M_{\leq i-2}$ and that $c_{1}(\calL_{\det})^{i}$ induces an isomorphism $\Gr^{M}_{i}\isom\Gr^{M}_{-i}$ (see \cite[Proposition 1.6.1]{WeilII}). We modify the filtration $M$ by setting
\begin{equation*}
F^{\geq i}\cohog{*}{\cJ_{b}}:=M_{\leq g_{a}-i}\cohog{*}{\cJ_{b}}.
\end{equation*} 
Then $F^{\geq i}$ is a decreasing filtration on $\cohog{*}{\cJ_{b}}$, which we call the {\em Lefschetz filtration.}

The fact that \eqref{c1i} is an isomorphism suggests a stronger statement, which we formulate as a conjecture.
\begin{conj}\label{conj:fil}
Assume (A1)-(A4) hold for $\calC\to\calB$. Then for every geometric point $b\in\calB$, the perverse filtrations $P_{\leq i}$ and the Lefschetz filtration $F^{\geq i}$ on $\cohog{*}{\cJ_{b}}$ are opposite to each other. 
\end{conj}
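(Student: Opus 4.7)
The plan is to upgrade the first Chern class $c_{1}(\calL_{\det})$ to a global $\sl_{2}$-triple acting on the direct image complex $L=\bR p_{*}\const{\cJ}$ in the derived category, and then to deduce opposedness fiberwise from elementary representation theory of $\sl_{2}$. Cup product with $c_{1}(\calL_{\det})$ defines a morphism $e:L\to L(1)[2]$ in $D^{b}_{c}(\calB)$, and the discussion in \S\ref{ss:pervLef} shows that the induced maps
\begin{equation*}
e^{g_{a}-i}:\pH^{d_{\calB}+i}L\to\pH^{d_{\calB}+2g_{a}-i}L(g_{a}-i)
\end{equation*}
are isomorphisms of perverse sheaves on all of $\calB$ for every $0\leq i\leq g_{a}$. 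This is precisely the input required by Deligne's theorem on canonical decompositions in the derived category (\emph{D\'ecompositions dans la cat\'egorie d\'eriv\'ee}, Proc. Symp. Pure Math. 55, 1994): from these data one extracts a canonical splitting $L\cong\bigoplus_{i}\pH^{d_{\calB}+i}L[-d_{\calB}-i]$ in $D^{b}_{c}(\calB)$, together with a canonical $\sl_{2}$-triple $(e,h,f)$ acting on $L$ in which $h$ acts by the scalar $i-g_{a}$ on the summand $\pH^{d_{\calB}+i}L[-d_{\calB}-i]$.

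Restricting $(e,h,f)$ to a geometric point $b\in\calB$ and passing to total cohomology, one obtains a genuine $\sl_{2}$-action on $\cohog{*}{\cJ_{b}}$ whose nilpotent element is cup product by $c_{1}(\calL_{\det}|_{\cJ_{b}})$. Let $\cohog{*}{\cJ_{b}}=\bigoplus_{w}V_{w}$ be the weight decomposition of $h$. By construction, the perverse filtration $P_{\leq i}\cohog{*}{\cJ_{b}}$ equals the weight filtration $\bigoplus_{w\leq i-g_{a}}V_{w}$. On the other hand, a direct computation on finite-dimensional $\sl_{2}$-modules shows that the Jacobson-Morozov filtration of the nilpotent $e$, normalized as in \S\ref{ss:pervLef} (i.e. $e\cdot M_{\leq j}\subset M_{\leq j-2}$ with $e^{j}:\Gr^{M}_{j}\isom\Gr^{M}_{-j}$), is the opposite weight filtration $M_{\leq j}=\bigoplus_{w\geq -j}V_{w}$. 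Consequently $F^{\geq i}=M_{\leq g_{a}-i}=\bigoplus_{w\geq i-g_{a}}V_{w}$, and a trivial check yields $P_{\leq i}\cap F^{\geq i+1}=0$ together with $P_{\leq i}+F^{\geq i+1}=\cohog{*}{\cJ_{b}}$ for every $i$, which is the concrete meaning of opposedness.

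The main obstacle is verifying that Deligne's canonical $\sl_{2}$-triple on $L$, which a priori lives only in $D^{b}_{c}(\calB)$, descends to a true $\sl_{2}$-action on stalk cohomology $\cohog{*}{\cJ_{b}}$ realizing $e$ as the geometrically defined cup product on the fiber. The operator $e$ is compatible with base change to $b$ essentially by construction, but $h$ and $f$ are manufactured from the canonical splitting and require that the splitting commute with proper base change; this should follow from the naturality of Deligne's construction but is the technical heart of the argument. Should this direct route encounter difficulties, an alternative would be to exploit the projective bundle structure $\calH_{n}\to\cJ$ for $n\geq 2g_{a}-1$ to introduce a second, auxiliary Lefschetz operator on $\bR f_{n,*}\const{\calH_{n}}$, and then to combine Theorem \ref{th:pervmac} with an analysis of the resulting double filtration to transfer opposedness from the Hilbert scheme side back to the compactified Jacobian.
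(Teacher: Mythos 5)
The statement you set out to prove is Conjecture~\ref{conj:fil}, which the paper explicitly leaves open: the authors say only that the isomorphism \eqref{c1i} ``suggests'' it. There is therefore no proof of the paper's to compare against, and you should be alert to the possibility that the authors did not prove it precisely because the ``obvious'' argument does not quite close.

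Your reduction is correctly set up. If one had a grading on $\cohog{*}{\cJ_{b}}$ that (i) refines the perverse filtration $P_{\leq\bullet}$ and (ii) is strictly of degree $+2$ for cup product by $e=c_{1}(\calL_{\det})$, then, since \eqref{c1i} implies that $e^{g_{a}-i}$ is an isomorphism between the corresponding graded pieces, the Jacobson--Morozov filtration of $e$ would necessarily be the accumulation of that grading in the opposite direction, and your final $\sl_{2}$-bookkeeping correctly deduces opposedness. So the conjecture is \emph{equivalent} to the existence of such a grading, and everything hinges on producing one.

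The gap is in the claim that Deligne's theorem supplies it. Deligne's result produces an isomorphism $L\cong\bigoplus_{i}\pH^{i}L[-i]$ in $D^{b}_{c}(\calB)$, but in an arbitrary such splitting the operator $e$ is only block-triangular: besides the perverse cohomology maps $\pH^{i}(e)\colon\pH^{i}L\to\pH^{i+2}L(1)$ on the ``diagonal'', $e$ has components landing in $\Ext^{k}\bigl(\pH^{a}L,\pH^{b}L(1)\bigr)$ with $k=a-b+2\geq1$, and the splittings themselves are a torsor under similar positive-degree Ext groups. Showing that these off-diagonal terms can be killed, i.e.\ that there exists a splitting in which $e$ is \emph{strictly} of degree $+2$, is exactly the nontrivial content; asserting a ``canonical $\sl_{2}$-triple acting on $L$'' presumes this. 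This is also why the obstacle is not, as you locate it, compatibility of $h$ and $f$ with base change to $b$ --- taking stalks and cohomology is a functor and causes no difficulty once the splitting of $L$ has the required property; the problem is whether a splitting with the required property exists at all. Without a precise citation and verification of an $\eta$-compatible version of Deligne's splitting (or an alternative, e.g.\ Hodge-theoretic, construction of the grading in the spirit of de Cataldo--Migliorini over $\CC$), your argument is a correct reduction followed by an unproved assertion, which is consistent with the authors' decision to call this a conjecture.
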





\section{Applications}

\subsection{Spectral curves}\label{ss:Hit} In this subsection, we give an example of a family $\pi:\calC\to\calB$ satisfying the conditions in (A1)-(A3) coming from the Hitchin fibration. 

Fix an integer $n\geq1$. Let $X$ be a smooth, projective and connected curve over $k$. Let $n$ be a positive integer and let $\calL$ be a line bundle over $X$. Let $\calA$ be the affine space
\begin{equation*}
\calA=\bigoplus_{i=1}^{n}\cohog{0}{X,\calL^{\otimes i}}.
\end{equation*}
viewed as an affine scheme over $k$.

Let $\Tot_{X}(\calL)=\underline{\Spec}_{X}(\oplus_{i\geq0}\calL^{\otimes-i}y^{i})$ be the total space of the line bundle $\calL$ ($y$ is a formal variable). We define a closed subscheme $\calY\subset\Tot_{X}(\calL)$ by the equation
\begin{equation*}
y^{n}+a_{1}y^{n-1}+\cdots+a_{n}=0, (a_{1},\cdots,a_{n})\in\calA.
\end{equation*}
Let $\pi:\calY\to\calA$ be the projection. This is the family of {\em spectral curves}. It appears in the study of Hitchin moduli space for the group $\GL_{n}$ (see Hitchin's original paper \cite[\S5.1]{Hitchin}). In particular, $\calA$ is the base of the Hitchin fibration.

Let $\Aint\subset\calA$ be the open locus where $\calY_{a}$ is integral (integrality is an open condition by \cite[Th\'eor\`eme 12.2.1]{EGA}). There is a stratification of $\Aint=\sqcup_{\delta\geq0}\Aint_{\delta}$ by the $\delta$-invariants of the spectral curves $\calY_{a}$.  Recall the following codimension estimate

\begin{lemma}\label{p:localdel}
\begin{enumerate}
\item []
\item {\em (Ng\^o \cite[p.4]{NgoDe})} If $\textup{char}(k)=0$, then $\codim_{\Aint}\Aint_{\delta}\geq\delta$ for all $\delta\geq0$. 
\item {\em (Ng\^o \cite[Proposition 5.7.2]{NgoFL}, which is based on a result of Goresky, Kottwitz and MacPherson \cite{GKM})} If $\textup{char}(k)>n$, then for each fixed $\delta_{0}\geq0$, there is an integer $N=N(\delta_{0})$ such that whenever $\deg(\calL)\geq N$ and $0\leq\delta\leq\delta_{0}$, we have
\begin{equation}\label{codim}
\codim_{\Aint}(\Aint_{\delta})\geq\delta.
\end{equation}
\end{enumerate}
\end{lemma}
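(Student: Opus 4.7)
The plan is to quote both estimates from the cited literature; the spectral-curve family $\pi:\calY\to\Aint$ is exactly the setting in which Ng\^o establishes these codimension bounds, and only a small amount of bookkeeping is needed to match notation.

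For part (1), following Ng\^o \cite{NgoDe}, one localizes the problem at each singular point of a spectral curve $\calY_a$. Two facts combine to give the bound. First, for a planar curve singularity with $\delta$-invariant $\delta$, the equi-$\delta$ locus in the versal deformation space has codimension at least $\delta$; this is proved by an induction along the partial normalization of the singularity, each step decreasing $\delta$ by one and cutting out a single condition. Second, in characteristic zero the evaluation map from $\calA$ to the product of local deformation spaces of the singularities of $\calY_a$ is smooth, so global codimensions are bounded below by the sum of local codimensions. Summing these local contributions and using that the total $\delta$-invariant is the sum of the local ones yields $\codim_{\Aint}\Aint_\delta\geq\delta$.

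For part (2) the outline is the same, but the versal-deformation count can fail in positive characteristic, since not every local deformation of a singularity need be realized by a global spectral curve. Ng\^o's workaround in \cite[Proposition 5.7.2]{NgoFL} replaces the local codimension estimate by a dimension bound on the corresponding affine Springer fiber, invoking the formula of Goresky, Kottwitz and MacPherson \cite{GKM}. The hypothesis $\textup{char}(k)>n$ is precisely what makes the GKM formula applicable for $\gl_n$, while $\deg(\calL)\geq N(\delta_0)$ is used, via a vanishing/twisting argument on the line bundle $\calL$, to ensure that the evaluation map from $\Aint$ to the relevant product of formal neighborhoods is surjective on the stratum with $\delta$-invariant at most $\delta_0$.

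The main obstacle, and the reason part (2) is more delicate than part (1), is the passage from the local picture at a formal disk around each singular point to the global one: in characteristic zero this is automatic from resolution of singularities, but in positive characteristic it forces both restriction to a bounded range $\delta\leq\delta_0$ and the degree hypothesis on $\calL$. Since both estimates are established in full generality in the cited references and the spectral-curve family falls literally within their scope, no further work is needed beyond pointing out that our $(\calY,\Aint)$ coincides with the objects considered there.
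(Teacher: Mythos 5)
The paper itself gives no proof of this lemma: Lemma~\ref{p:localdel} is a pure recollection of two statements from Ng\^o's work, and the text moves directly to applying them. So there is strictly nothing in the paper to compare your sketch against; the best you can do here is indeed to point to the citations and give the reader a rough idea of what lies behind them, which is what you attempt.

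Your overview is broadly along the right lines but contains two imprecisions worth flagging. First, for part (1) the local codimension estimate in characteristic zero is due to Diaz--Harris \cite{DH} (the paper cites this in the proof of Proposition~\ref{p:versal}), and the underlying input is Teissier's simultaneous-resolution result \cite{Teissier} for $\delta$-constant families, not an ``induction along the partial normalization, each step decreasing $\delta$ by one''; the $\delta$-constant stratum in the miniversal base is in fact smooth of codimension exactly $\delta$, which is a stronger statement than a step-by-step codimension count would naturally give. Second, the input from Goresky--Kottwitz--MacPherson \cite{GKM} in Ng\^o's Proposition~5.7.2 is a \emph{codimension} computation for root valuation strata inside the Lie algebra (or its adjoint quotient), not a dimension bound on affine Springer fibers; the dimension formula $\dim(\text{affine Springer fiber})=\delta$ is a separate result of Kazhdan--Lusztig and Bezrukavnikov. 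You are right that $\textup{char}(k)>n$ is what makes the GKM computation applicable for $\gl_{n}$, and that $\deg(\calL)\geq N(\delta_{0})$ is used to make the evaluation map from $\Aint$ to the local deformation spaces (or to the relevant local strata) dominant on the range $\delta\leq\delta_{0}$ -- this is exactly the mechanism reproduced in the paper's own Lemma~\ref{l:codim}. With those two points corrected, your description accurately summarizes the content of the references.
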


If $\textup{char}(k)=0$, we take $\calB=\Aint$. If $\textup{char}(k)>n$, we fix $\delta_{0}\geq0$ and $\deg(\calL)\geq N(\delta_{0})$, and let $\calB=\sqcup_{\delta\leq\delta_{0}}\Aint_{\delta}\subset\Aint$ be the open locus where the estimate \eqref{codim} holds. We denote the restriction of $\pi$ to $\calB$ by the same symbol.

\begin{prop}
The family of curves $\pi:\calY\to\calB$ satisfies (A1)-(A3) in \S\ref{ss:assumption}. In particular, Theorem \ref{th:pervmac} applies to $\pi$.
\end{prop}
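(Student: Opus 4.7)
I would verify the three conditions (A1)-(A3) in turn, spending most of the effort on (A2).

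Condition (A1) is essentially built into the construction: integrality of the geometric fibers $\calY_a$ for $a\in\calB$ is precisely the defining property of $\calB\subset\Aint$, and each $\calY_a$ has planar singularities because it is by construction a Cartier divisor in the smooth surface $\Tot_X(\calL)$, so at each of its points the completed local ring is of the form $k[[x,y]]/(f)$. Condition (A3) is a direct consequence of Lemma~\ref{p:localdel} together with upper semi-continuity of the $\delta$-invariant in flat families of reduced curves. For any (not necessarily closed) point $b\in\calB$ with $\delta(b)=\delta$, the Zariski closure $\overline{\{b\}}$ lies inside $\bigsqcup_{\delta'\geq\delta}\Aint_{\delta'}$, so Lemma~\ref{p:localdel} gives
\begin{equation*}
\codim_\calB(\overline{\{b\}})\geq\codim_\calB(\Aint_\delta)\geq\delta=\delta(b),
\end{equation*}
with the applicable part of Lemma~\ref{p:localdel} depending on the characteristic (and on our restriction to $\delta\leq\delta_0$ when $\textup{char}(k)>n$).

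The substantive step is condition (A2), the smoothness of the relative Hilbert scheme $\calH_n=\Hilb^n(\calY/\calB)$. I would proceed in two stages. The first, and routine, stage is to show that the total space $\calY$ is smooth. Indeed, $\calY$ is cut out in the smooth $(d_\calB+2)$-dimensional variety $\Tot_X(\calL)\times\calA$ by the single equation $F=y^n+a_1y^{n-1}+\cdots+a_n$, and the derivative of $F$ in the ``$a_n$ direction'' is the nowhere-vanishing tautological section $1$ of $\calL^n$ pulled back from $X$; hence $dF$ is nonzero at every point of $\calY$, making $\calY$ a smooth hypersurface of dimension $d_\calB+1$.

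The second stage, which I expect to be the main obstacle, is to deduce smoothness of $\calH_n$ from smoothness of $\calY$. I would realize $\calH_n$ as the zero scheme of a section of a rank-$n$ vector bundle over the smooth ambient space $\Hilb^n(\Tot_X(\calL))\times\calA$, namely the bundle $\calE$ whose fiber at $[Z]$ is $H^0(Z,\calL^n|_Z)$, with the section sending $(Z,a)$ to $F_a|_Z$. Since the ambient space has dimension $2n+d_\calB$ and $\rank\calE=n$, the expected dimension of the zero scheme matches the value $d_\calB+n$ coming from Lemma~\ref{l:Hsmooth}(2); transversality (and hence smoothness) then boils down to the surjectivity of the differential, which in the $\calA$-direction is the linear map $\calA\to H^0(Z,\calL^n|_Z)$, $a\mapsto F_a|_Z$, at every zero of the section. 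Using that $1,y,\ldots,y^{n-1}$ form an $\calO_X$-module basis of $\calO_{\calY_a}$ (since the spectral cover $\calY_a\to X$ is finite of degree $n$), this surjectivity reduces to the claim that $H^0(X,\calL^i)\to H^0(\pr_X(Z),\calL^i|_{\pr_X(Z)})$ is surjective for all $1\leq i\leq n$ and all $Z$ in the relevant range of length $n$, which holds once $\deg\calL$ is sufficiently large---if necessary one enlarges $N(\delta_0)$ from Lemma~\ref{p:localdel} to guarantee this. With the transversality in hand, $\calH_n$ is a smooth local complete intersection of the expected dimension, completing (A2).
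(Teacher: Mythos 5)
Your verification of (A1) and (A3) matches the paper exactly: (A1) is built into the construction ($\calY_a$ is integral since $a\in\Aint$ and planar as a divisor in the smooth surface $\Tot_X(\calL)$), and (A3) is immediate from Lemma~\ref{p:localdel} together with semicontinuity of $\delta$ in the family.

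For (A2), the paper does not give an argument; it cites \cite[Claim 1 in the proof of Proposition 3.2.6]{YunJR}. Your proof supplies a direct argument that recovers the content of that citation. The setup is sound: $\calH_n$ sits inside $\Hilb^n(\Tot_X(\calL))\times\calA$ (smooth of dimension $2n+d_\calB$ by Fogarty) as the zero locus of the section $(Z,a)\mapsto F_a|_Z$ of the rank-$n$ bundle with fiber $H^0(Z,\pi_X^*\calL^{\otimes n}|_Z)$, and to prove smoothness it suffices to show the derivative in the $\calA$-direction, namely $(\dot a_i)\mapsto\sum_i\dot a_i\, y^{n-i}|_Z$, is surjective at every zero. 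Your reduction is correct, though it deserves to be spelled out a bit more carefully than you did: since $Z\subset\calY_a$ the scheme-theoretic projection $D=\pr_X(Z)$ is defined, $\calO_Z$ is a quotient of $\calO_{\calY_a\times_X D}$, and $\pi_{X,*}\calO_{\calY_a}=\bigoplus_{i=0}^{n-1}\calL^{\otimes -i}$; so after pushing to $X$ and tensoring with $\calL^{\otimes n}$ the map $\bigoplus_{i=1}^n H^0(D,\calL^{\otimes i}|_D)\to H^0(Z,\pi_X^*\calL^{\otimes n}|_Z)$ is literally the $H^0$ of a surjection of Artinian sheaves, hence surjective. Thus everything does reduce to surjectivity of $H^0(X,\calL^{\otimes i})\to H^0(D,\calL^{\otimes i}|_D)$ for $1\le i\le n$ and all $D$ of length $\le n$, which is $(n-1)$-very-ampleness of $\calL^{\otimes i}$ and holds for $\deg\calL$ large. (Strictly, ``$1,y,\dots,y^{n-1}$ form an $\calO_X$-basis'' should read ``$\pi_{X,*}\calO_{\calY_a}=\bigoplus_{i=0}^{n-1}\calL^{\otimes -i}$ with $y^i$ generating the $i$-th summand,'' since $y$ is a section of $\pi_X^*\calL$, not a scalar.) The dependence on $\deg\calL$ being large is consistent with the paper's setup: in the positive-characteristic case $\deg\calL\ge N(\delta_0)$ is already imposed, and the cited result in \cite{YunJR} likewise operates under a large-degree hypothesis, so this is not a deviation.

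In short: same approach for (A1), (A3); for (A2) you have unpacked the argument the paper delegates to \cite{YunJR}, and your argument is correct modulo the minor phrasing issue noted above.
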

\begin{proof}
First of all, $\calY$ is closed in $\Tot_{X}(\calL)\times\calB$. Since we can compactify $\Tot_{X}(\calL)$ into a ruled surface over $X$ (hence projective) by adding a divisor at infinity, $\pi$ is a projective morphism. We check the conditions one by one.

(A1) Each fiber $\calY_{a}$ is integral because $\calB\subset\Aint$. Since $\calY_{a}\subset\Tot_{X}(\calL)$, it has planar singularities.

(A2) is proved by the second-named author in \cite[Claim 1 in the proof of Proposition 3.2.6]{YunJR}. 

(A3) is guaranteed by the choice of $\calB$.
\end{proof}

\subsection{Versal deformation of curves}
Let $C$ be an integral curve with planar singularities.  We construct in this subsection a family of curves $\pi:\calC\to\calB$ with $C=\pi^{-1}(b_{0})$ satisfying (A1)-(A3) in \S\ref{ss:assumption}. 

It follows from usual deformation-theoretic arguments (see \cite{Artin} and \cite{Vistoli}
for example) that $C$ can be included
in a family
\begin{equation}\label{versalfamily}\nonumber
\xymatrix{
C \ar@{^{(}->}[r]\ar[d]& \calC \ar[d]\\
b_0 \ar@{^{(}->}[r] & \calB \\
}
\end{equation}
that is versal at $b_{0} \in \calB$. More concretely, one can choose an
embedding of $C$ in $\PP^N$ for which 
\begin{equation*}
\cohog{1}{C,\calI_{Z}\otimes\calO_{\PP^{N}}(1)}=0
\end{equation*}
for all finite subschemes $Z\subset C$ with $\textup{length}(Z)\leq2g_{a}-1$, where $g_{a}$ is the arithmetic genus of $C$. By standard calculation, the above vanishing implies
\begin{equation}\label{van}
\cohog{1}{C,\calI_{Z}\otimes N_{C/\PP^{N}}}=0
\end{equation}
for any such $Z$. Let $\Hilb^{P}(\PP^N)$ be the Hilbert scheme of $\PP^{N}$ with Hilbert polynomial $P$ equal to that of $C$. Let $\pi:\calC\to\Hilb^{P}(\PP^{N})$ be the universal curve.

\begin{prop}\label{p:versal} Assume either $\textup{char}(k)=0$ or $\textup{char}(k)>\max\{\mult_{p}(C);p\in C\}$. Under the above choice of the embedding $C\hookrightarrow\PP^{N}$, there exists a Zariski neighborhood $\calB\subset\Hilb^{P}(\PP^{N})$ of $b_{0}=[C]$ over which the universal family $\pi:\calC\to\calB$ satisfies the conditions (A1)-(A3) in \S\ref{ss:assumption}.
\end{prop}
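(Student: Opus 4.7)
The plan is to shrink $\calB\subset\Hilb^{P}(\PP^{N})$ to a Zariski neighborhood of $b_{0}=[C]$ in three stages, one for each of (A1)--(A3). As a preliminary, the $Z=\emptyset$ instance of the vanishing \eqref{van} reads $\cohog{1}{C,N_{C/\PP^{N}}}=0$, so by standard Hilbert scheme deformation theory $\Hilb^{P}(\PP^{N})$ is smooth at $b_{0}$; after replacing $\calB$ by a smooth irreducible Zariski neighborhood of $b_{0}$, we may assume $\calB$ itself is smooth and irreducible. Moreover, by taking the embedding sufficiently ample (which we may arrange, since the only constraint on it so far is the vanishing \eqref{van}), the Kodaira-Spencer map $\cohog{0}{C,N_{C/\PP^{N}}}\to\Ext^{1}_{C}(\Omega_{C},\calO_{C})$ is surjective, so $\pi:\calC\to\calB$ is formally versal at $b_{0}$; this will be the key input for (A3).

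For (A1), integrality of fibers is an open condition on $\Hilb^{P}(\PP^{N})$ by \cite[Théorème 12.2.1]{EGA}. The planar condition amounts to each fiber having embedding dimension $\leq 2$ at every point: the set $U\subset\calC$ of points where the fiber of $\pi$ has embedding dimension $\leq 2$ is open (by upper semi-continuity of $\dim T_{x}\calC_{\pi(x)}$) and contains $C$, so since $\pi$ is proper, $\pi(\calC\setminus U)\subset\calB$ is closed and misses $b_{0}$. A further Zariski shrinking of $\calB$ removes both the non-integral and non-planar loci.

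For (A2), smoothness of $\calH_{n}$ is an open condition, so it suffices to verify it at each closed point $(Z,C)\in\calH_{n}$ lying over $b_{0}$, for all $n\leq 2g_{a}-1$. Viewing $\calH_{n}$ as the incidence locus $\{Z\subset C\}$ inside $\Hilb^{n}(\PP^{N})\times\Hilb^{P}(\PP^{N})$, the standard obstruction computation based on applying $\Hom(-,\calO_{Z})$ to the short exact sequence $0\to\calI_{C}\to\calI_{Z}\to\calI_{Z/C}\to 0$ of ideals in $\calO_{\PP^{N}}$ identifies the obstruction to simultaneously deforming the pair $(Z,C)$ inside $\PP^{N}$ with an element of $\cohog{1}{C,\calI_{Z}\otimes N_{C/\PP^{N}}}$. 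This group vanishes by the chosen embedding \eqref{van}, and combined with the smoothness of $\Hilb^{P}(\PP^{N})$ at $b_{0}$ this proves that $\calH_{n}$ is smooth at $(Z,C)$.

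The main obstacle is (A3), which requires codimension control on the $\delta$-constant strata throughout a neighborhood of $b_{0}$. Since the condition is equivalent to the statement that for each $k\geq 0$ the closed locus $\calB_{\geq k}=\{b\in\calB:\delta(b)\geq k\}$ has codimension at least $k$, formal versality at $b_{0}$ reduces the bound in a formal neighborhood of $b_{0}$ to the analogous bound for the miniversal deformation space $\textup{Def}(C)$. By a standard deformation-theoretic argument, $\textup{Def}(C)$ is formally smooth over the product $\prod_{p\in C_{\sing}}\textup{Def}(C,p)$ of the miniversal deformation spaces of the local planar singularities. Since $\delta$ is additive over singular points, the global $\delta$-strata pull back from products of local ones, so it suffices to show that within the miniversal deformation of a planar curve singularity of multiplicity $m$, the closed locus where $\delta\geq\delta_{0}$ has codimension at least $\delta_{0}$. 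This local codimension bound is classical in characteristic $0$ (going back to Teissier) and extends to $\textup{char}(k)>m$ under the standing multiplicity hypothesis; the argument runs parallel to the one cited by Ng\^o in Lemma \ref{p:localdel}(2) for spectral curves. Upper semi-continuity of $\delta$ and constructibility of the strata then propagate (A3) from a formal neighborhood of $b_{0}$ to an honest Zariski neighborhood after a final shrinking of $\calB$.
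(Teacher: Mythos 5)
Your proposal follows the same broad strategy as the paper's proof (stagewise Zariski shrinking for each condition, versality via \eqref{van}, reduction of (A3) to local $\delta$-codimension bounds through the versal map to $\prod_p V_p$), and the treatment of (A1) is correct. There are, however, two substantive gaps.

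For (A2), you assert that a "standard obstruction computation" identifies the obstruction to deforming the nested pair $(Z\subset C\subset\PP^N)$ with an element of $\cohog{1}{C,\calI_Z\otimes N_{C/\PP^N}}$. This is not standard and is not justified; the deformation theory of the nested Hilbert scheme is more delicate than the claim suggests. Even after lifting $C$ (unobstructed by \eqref{van}), the obstruction to lifting $Z$ inside the lifted $C$ lives in $\Ext^1_{\calO_C}(\calI_{Z/C},\calO_Z)$, a local Ext which is typically nonzero exactly when $Z$ meets $C^{\sing}$ and is not controlled by \eqref{van}; and deforming $Z$ in $\PP^N$ also runs into the a priori singularity of $\Hilb^n(\PP^N)$ at $[Z]$, which your argument never addresses. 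The paper avoids obstruction theory entirely: Lemma \ref{l:hilbpn} shows any length-$n$ subscheme of a locally planar curve lies in the smooth locus of $\Hilb^n(\PP^N)$; then $\calH_n$ is exhibited as the zero locus of a section of a rank-$(N-1)n$ vector bundle over $\Hilb^n(\PP^N)^{\sm}\times\calB$, giving the lower bound $n+\dim\calB$ on local dimension, which is matched against a tangent-space upper bound coming from the surjectivity forced by \eqref{van}. To make your (A2) rigorous you would need either to supply a correct obstruction computation (and something like Lemma \ref{l:hilbpn} will still be needed) or to switch to the dimension-counting argument.

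For (A3), you correctly reduce to the local estimate $\codim(V_{p,\delta})\geq\delta$ in the miniversal deformation of each planar singularity, but then assert that the known characteristic-zero result "extends to $\textup{char}(k)>m$" with an argument "running parallel" to Ng\^o's. This is precisely the point the paper cannot wave through: there is no pre-existing reference in positive characteristic, so the paper proves it as Lemma \ref{l:codim}, realizing $\hatO$ as a germ of an affine spectral curve, verifying by an explicit tangent-map computation (which is where $\textup{char}(k)>n$ enters, via coprimality of $f$ and $\partial_y f$) that the Hitchin base dominates the local miniversal deformation once $\deg\calL$ is large, and then importing the GKM codimension estimate as in Lemma \ref{p:localdel}(2). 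Without this lemma, the positive-characteristic case of (A3) is unproven.
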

\begin{proof}
(A1) The property of being locally planar is open: consider the relative cotangent sheaf $\Omega_{\calC/\Hilb^{P}(\PP^{N})}$ and let $Z\subset\calC$ be the locus where its stalk has dimension at least 3. Clearly $Z$ is closed, hence $\pi(Z)\subset\Hilb^{P}(\PP^{N})$ is also closed. Then $\Hilb^{P}(\PP^{N})-\pi(Z)$ is precisely the locus where $\calC_{b}$ is locally planar. 
Also being integral is an open condition \cite[Th\'eor\`eme 12.2.1]{EGA}. Therefore we can take a Zariski neighborhood of $[C]$ in $\Hilb^{P}(\PP^{N})$ over which the fibers are integral and locally planar.

(A2) When $k=\CC$, this is proved by V.Shende in \cite[Proposition 13]{Shende}. In general, we proceed as follows. By \eqref{van}, $\cohog{1}{C,N_{C/\PP^{N}}}=0$, hence $[C]$ is in the smooth locus of $\Hilb^{P}(\PP^{N})$. Shrinking $\calB$ if necessary, we may assume $\calB$ is contained in the smooth locus of $\Hilb^{P}(\PP^{N})$. 

We first need the following lemma:
\begin{lemma}\label{l:hilbpn}
Any finite subscheme $Z'$ of length $n$ of a locally planar curve $C'\subset\PP^{N}$ lies in the smooth locus of $\Hilb^{n}(\PP^{N})$.
\end{lemma}
\begin{proof}
Since $Z'$ is planar, it lies in the closure of the locus of $n$ distinct
points on $\PP^N$, so the local dimension at $[Z']$ is at least $n\cdot N$,
and it suffices to bound the tangent space.  For this, we can assume that
$Z'$ is supported at a point and choose local coordinates so that $Z
\subset S = \Spec (k[[x,y]])\subset P = \Spec (k[[x,y, z_1, \dots, z_{N-2}]])$.
Since $I_{Z/P}$ is generated by the ideal $I_{Z/S}$ and $\{z_k\}$, we have
a surjection $$I_{Z/S}/I_{Z/S}^{2} \oplus \calO_{Z}^{\oplus(N-2)} \rightarrow
I_{Z/P}/I_{Z/P}^{2}\rightarrow 0$$ which leads to the inclusion
$$0\rightarrow \Hom(I_{Z/P}/I_{Z/P}^{2},\calO_Z)\rightarrow
\Hom(I_{Z/S}/I_{Z/S}^{2}, \calO_Z) \oplus \calO_{Z}^{N-2}.$$ By taking
lengths we get the desired upper bound of $n\cdot N$.
\end{proof}

Now fix $0\leq n\leq2g_{a}-1$. Let $\calI_{C^{\univ}}$ be the ideal sheaf of the universal curve $C^{\univ}\subset\calB\times\PP^{N}$. Let $\calO_{Z^{\univ}}$ be the structure sheaf of the universal subscheme $Z^{\univ}\subset\Hilb^{n}(\PP^{N})\times\PP^{N}$. Let $\calE$ be the complex $\bR\pr_{*}\bR\uHom(\calI_{C^{\univ}},\calO_{Z^{\univ}})$, where $\pr:\Hilb^{n}(\PP^{N})\times\calB\times\PP^{N}\to\Hilb^{n}(\PP^{N})\times\calB$ is the projection. Over $(Z'\subset C')\in\Hilb^{n}(\calC/\calB)$, $\Ext^{>0}_{\PP^{N}}(\calI_{C'},\calO_{Z'})=0$ for dimension reasons, hence $\calE$ is concentrated in degree zero in a neighborhood of $\Hilb^{n}(\calC/\calB)$ by semicontinuity, and is a vector bundle of rank $(N-1)n$ there. This vector bundle $\calE$ has a canonical section $s$ given by $\calI_{C^{\univ}}\hookrightarrow\calO_{\PP^{N}}\to\calO_{Z^{\univ}}$. Now $\Hilb^{n}(\calC/\calB)$ is cut off by the vanishing of $s$ on the smooth locus of $\Hilb^{n}(\PP^{N})\times\calB$, therefore the local dimension of $\Hilb^{n}(\calC/\calB)$ is at least
\begin{equation*}
\dim\Hilb^{n}(\PP^{N})^{\sm}+\dim\calB-\rank\calE=n+\dim\calB.
\end{equation*}

On the other hand, the tangent space of $\Hilb^{n}(\calC/\calB)$ at $(Z\subset C)$ is the kernel of the map
\begin{equation*}
\Hom(\calI_{Z}/\calI_{Z}^{2},\calO_{Z})\oplus\cohog{0}{C,N_{C/\PP^{N}}}\to \cohog{0}{Z,N_{C/\PP^{N}}|_{Z}}.
\end{equation*}
By \eqref{van}, this map is surjective. We argue in the proof of Lemma \ref{l:hilbpn} that $\Hom(\calI_{Z}/\calI_{Z}^{2},\calO_{Z})=Nn$. Clearly $h^{0}(C,N_{C/\PP^{N}})=\dim\calB$ and $h^{0}(Z,N_{C/\PP^{N}}|_{Z})=n\cdot\rank (N_{C/\PP^{N}})=n(N-1)$, therefore the dimension of the tangent space at $(Z\subset C)$ is $n+\dim\calB$. This together with the lower bound above gives the smoothness of $\Hilb^{n}(\calC/\calB)$ along the fiber $\Hilb^{n}(C)$. By openness of the smooth locus and properness of $\Hilb^{n}(\calC/\calB)\to\calB$, we may shrink $\calB$ further to ensure that $\Hilb^{n}(\calC/\calB)$ is smooth.

Above we fixed an integer $n$ and found a non-empty Zariski open subset of $\calB(n)\subset\calB$ over which $\Hilb^{n}(\calC/\calB)$ is smooth. Now $\cap_{n=0}^{2g_{a}-1}\calB(n)$ guarantees the smoothness condition (A2).

(A3) For every singularity $p\in C$, the deformation functor $\textup{Def}(\hatO_{C,p})$ has an {\em algebraic} miniversal hull $V_{p}$ by M.Artin's theorem \cite[Theorem 3.3]{Artin} and Elkik's theorem on isolated singularities \cite{Elkik}. More precisely, $V_{p}=\Spec R_{p}$ is of finite type over $k$ equipped with a point $0_{p}\in V_{p}(k)$ and an $R_{p}$-flat family of algebras $\hatO_{R_{p}}$ with an isomorphism $\hatO_{R_{p}}\otimes_{R_{p}}k(0_{p})\cong\hatO$. Let $\hatR_{p}$ be the completion of $R_{p}$ at $0_{p}$. The canonical morphism $v_{p}:\Spf\hatR_{p}\to\textup{Def}(\hatO_{C,p})$ is formally smooth and induces a bijection on the tangent spaces.

By the versality of $V_{p}$, there exists an \'etale neighborhood $\calB'$ of $[C]\in\calB$ and a pointed morphism
\begin{equation}\label{localDef}
(\calB',[C])\to\prod_{p\in C^{\sing}}(V_{p},0_{p}).
\end{equation}
Since $\calB$ is also versal, this morphism is smooth (see, for instance, \cite[Section A]{FGvS}, or 
\cite[Appendix A]{Laumon}). Therefore the codimension estimate holds for $\calB$ around $[C]$ with respect to the global $\delta$-invariant if and only if the codimension estimate holds for each $V_{p}$ around $0_{p}$ with respect to the local $\delta$-invariant (here we use the fact that the locus $V_{p}^{\sm}\subset V_{p}$ parametrizing smooth deformations is not empty).

In characteristic zero, the codimension estimate for $V_{p}$ is proven by Diaz and Harris \cite[Theorem 4.15]{DH}. In arbitrary characteristic, we do not know a reference but it can be deduced from the family of spectral curves, see Lemma \ref{l:codim} below.
\end{proof}

\begin{lemma}\label{l:codim}
Let $\hatO$ be the completed local ring of a planar curve singularity over $k$. Assume $\textup{char}(k)$ is either 0 or greater than the multiplicity of the singularity defined $\hatO$. Let $V$ be an algebraic miniversal deformation of $\hatO$, with the base point $0\in V(k)$ corresponding to $\hatO$. Then there is a Zariski neighborhood $V'$ of $0$ such that
\begin{equation*}
\codim_{V'}(V_{\delta}')\geq\delta.
\end{equation*}
for any $\delta$-constant stratum $V_{\delta}'\subset V'$.
\end{lemma}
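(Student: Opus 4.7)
The plan is to deduce the local estimate from Ng\^o's global codimension estimate for spectral curves (Lemma \ref{p:localdel}(2)), by realizing $\hatO$ as the completed local ring of a singular point on a spectral curve.

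First, since $\textup{char}(k) = 0$ or exceeds the multiplicity $m$ of the singularity, a defining equation of $\hatO$ can be put after a generic linear change of coordinates in Weierstrass form $f(x,y) = y^m + b_1(x)y^{m-1} + \cdots + b_m(x)$ with $b_i \in k[[x]]$. Choose a smooth projective connected curve $X$ over $k$ with a marked point $p_0$ and uniformizer $x$, and a line bundle $\calL$ on $X$ of degree $d$ to be taken large. By matching Taylor coefficients at $p_0$ to sufficient order, we can find $a_i \in H^0(X, \calL^i)$ whose germs at $p_0$ agree with $b_i$, so that the spectral curve $C_0 = \calY_{a_0} \subset \Tot_X(\calL)$ defined by $y^m + a_1 y^{m-1} + \cdots + a_m = 0$ has $\widehat{\calO}_{C_0, p_0} \cong \hatO$. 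Let $p_0, p_1, \ldots, p_r$ be the singular points of $C_0$, set $\hatO_j = \widehat{\calO}_{C_0, p_j}$ and $\delta_j = \delta(\hatO_j)$. Enlarging $d$, Lemma \ref{p:localdel}(2) provides an open neighborhood $\calB \subset \Aint$ of $a_0$ on which $\codim_\calB(\calB_\sigma) \geq \sigma$ for $0 \leq \sigma \leq \sum_j \delta_j$.

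Next, I argue that for $d$ sufficiently large, the Kodaira-Spencer map
\begin{equation*}
T_{a_0}\calA = \bigoplus_{i=1}^m H^0(X, \calL^i) \longrightarrow \prod_j T^1_{\hatO_j}
\end{equation*}
is surjective, i.e.\ the spectral curve family provides a simultaneously versal deformation of $C_0$ at all its singular points. This map is induced by restricting sections to a finite subscheme supported on the $p_j$, and its surjectivity follows from the standard vanishing $H^1(X, \calL^i \otimes \calI_Z) = 0$, which holds for $\deg \calL \gg 0$. Since planar singularities are hypersurface singularities in a smooth surface, the algebraic miniversal deformation bases $V, V_1, \ldots, V_r$ of $\hatO, \hatO_1, \ldots, \hatO_r$ (produced by Artin \cite{Artin} and Elkik \cite{Elkik}) are smooth at their respective base points. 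By versality together with Artin approximation, there is an \'etale neighborhood $U \to \calB$ of $a_0$ and a smooth morphism $\pi \colon U \to V \times V_1 \times \cdots \times V_r$.

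Finally, fix any $\delta \leq \delta(\hatO)$, and consider the locally closed subset
\begin{equation*}
U^\circ_\delta := \pi^{-1}\bigl(V_\delta \times V_1^{\sm} \times \cdots \times V_r^{\sm}\bigr) \subset U,
\end{equation*}
where $V_j^{\sm} \subset V_j$ is the (dense) open locus parametrizing smooth deformations of $\hatO_j$. Over $U^\circ_\delta$ the spectral curve $\calY_a$ has a unique singular point, close to $p_0$, with local $\delta$-invariant equal to $\delta$; in particular its total $\delta$-invariant is $\delta$, so $U^\circ_\delta \subset \calB_\delta \cap U$. Therefore by Lemma \ref{p:localdel}(2),
\begin{equation*}
\codim_U U^\circ_\delta \;\geq\; \codim_U (\calB_\delta \cap U) \;\geq\; \delta.
\end{equation*}
On the other hand, smoothness of $\pi$ together with $\codim_{V_j} V_j^{\sm} = 0$ gives
\begin{equation*}
\codim_U U^\circ_\delta \;=\; \codim_{V \times \prod_j V_j}\!\bigl(V_\delta \times \textstyle\prod_j V_j^{\sm}\bigr) \;=\; \codim_V V_\delta.
\end{equation*}
Combining yields $\codim_V V_\delta \geq \delta$, as required. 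The main obstacle is verifying versality: one must ensure $\calL$ is ample enough that the Kodaira-Spencer map surjects onto $T^1$ at every singular point of $C_0$ simultaneously, and then promote the resulting formally smooth map to an honest smooth morphism of algebraic spaces via Artin approximation; once this is done, the smoothness of the factored morphism $\pi$ is exactly the ingredient that isolates the $V$-factor from the nuisance contributions of the other singular points.
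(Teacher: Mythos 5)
Your overall strategy matches the paper's: realize $\hatO$ on a spectral curve, use versality of the Hitchin base to produce a smooth morphism to the product of local miniversal deformation spaces, and then isolate the distinguished $V$-factor by pulling back the locus where all the auxiliary singularities $p_1,\dots,p_r$ are smoothed. The final isolation step (intersecting with $V_1^{\sm}\times\cdots\times V_r^{\sm}$ and comparing codimensions via the smooth morphism $\pi$) is essentially the argument recorded in the paper after \eqref{localDef}. However, you take a shortcut that introduces a genuine gap: you want to apply Lemma~\ref{p:localdel}(2) directly to the projective spectral curve family over $X$, which requires that the base point $a_0$ lie in $\Aint$, i.e.\ that the spectral curve $C_0=\calY_{a_0}$ you construct is \emph{integral}. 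This is not automatic. The germ $\hatO$ may have several analytic branches, so $C_0$ built by matching jets at $p_0$ could a priori be globally reducible, and nothing in your construction rules this out. You would need an additional argument that, for $\deg\calL$ large and the jet of $(a_i)$ at $p_0$ fixed, one can choose $(a_i)$ so that $\calY_{a_0}$ is irreducible (e.g.\ by forcing full monodromy at another point). Without this, the invocation of Lemma~\ref{p:localdel}(2) is not licensed.

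The paper avoids this issue entirely by working with \emph{affine} spectral curves over $\AA^1$: it trivializes $\calO(N)$ over $\AA^1\subset\PP^1$, takes $\calA_N$ to be the space of polynomial coefficients, and then observes that Ng\^o's codimension argument in \cite[Proposition 5.7.2]{NgoFL} is local around the singularities and works for spectral curves over a non-complete curve with no integrality hypothesis; the paper therefore re-derives the needed estimate for $\calA_N$ rather than quoting Lemma~\ref{p:localdel}(2). Two further minor imprecisions in your write-up: (i) the germs of $a_i$ cannot literally equal $b_i\in k[[x]]$ since $a_i\in H^0(X,\calL^i)$ is algebraic; what one arranges is agreement up to a finite order large enough to force $\widehat{\calO}_{C_0,p_0}\cong\hatO$ by finite determinacy (the paper uses \cite[Lemma 3.12]{AH} for exactly this); (ii) over $U^\circ_\delta$ the fibre need not have a \emph{unique} singular point — deformations of $\hatO$ can split it into several nearby singularities — only that the total $\delta$-invariant of the singularities clustered near $p_0$ is $\delta$, which is what the argument actually uses.
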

\begin{proof}
By Weierstrass preparation theorem, we may choose a non-unit $0\neq t\in\hatO$ such that $\hatO\cong k[[t]][y]/(y^{n}+a_{1}^{*}(t)y^{n-1}+\cdots+a_{n}^{*}(t))$ for some $a_{i}^{*}(t)\in k[[t]]$, where $n$ is the multiplicity of $\hatO$. We may even assume $a_{i}^{*}(t)\in k[t]$ without changing the isomorphism type of $\hatO$, by a result of M.Artin and Hironaka \cite[Lemma 3.12]{AH}.

Let $Y^{*}=\Spec k[t,y]/(f)$ where $f(t,y)=y^{n}+a_{1}^{*}(t)y^{n-1}+\cdots+a_{n}^{*}(t)$. This is an affine plane curve with isolated singularities at $\{p_{1},\cdots,p_{r}\}$, and we may assume $\hatO_{Y^{*},p_{1}}\cong\hatO$. We have the miniversal deformations $(V_{i},0_{i})$ of the singularities $p_{i}$ as in the proof of Proposition \ref{p:versal}. Each $V_{i}$ is smooth at $0_{i}$ because $\textup{Def}(\hatO_{Y^{*},p_{i}})$ is formally smooth \cite[Th\'eor\`eme A.1.2(3)]{Laumon}.

Let $\calA_{N}$ be the Hitchin base associated with the curve $\PP^{1}$ and line bundle $\calO(N)$. We trivialize $\calO(N)$ over $\AA^{1}_{t}=\PP^{1}-\{\infty\}$, and identify $\calA_{N}$ with an affine space with coordinates $\una=(a_{i}(t))_{1\leq i\leq n}$ where $a_{i}(t)\in k[t]$ with $\deg(a_{i})\leq Ni$. Then $\calA_{N}$ parametrizes a family of affine spectral curves $\calY\to\calA_{N}$ with
\begin{equation*}
\calY_{\una}=\Spec k[t,y]/(y^{n}+a_{1}(t)y^{n-1}+a_{2}(t)y^{n-2}\cdots+a_{n}(t)), \textup{for }\una=(a_{i}(t))\in\calA_{N}.
\end{equation*}
We choose $N$ large enough so that the original data $(a_{i}^{*}(t))$ gives a point $\una^{*}\in\calA_{N}(k)$. By the versality of $V_{i}$, there is an \'etale neighborhood $\calA'_{N}$ of $\una^{*}\in\calA_{N}$ and a pointed morphism
\begin{equation*}
\rho:(\calA'_{N},\una^{*})\to\prod_{i=1}^{r}(V_{i},0_{i})
\end{equation*} 
such that the family of affine spectral curves $\calY\to\calA_{N}$ is, \'etale locally around the singularities of $Y^{*}=\calY_{\una^{*}}$, isomorphic to the pull-back of the disjoint union of the miniversal families over $V_{i}$.

We claim that $\rho$ is smooth at $\una^{*}$ for large $N$. Since both $\calA'_{N}$ and the $V_{i}$'s are smooth around the base points, we only need to show the tangent map of $\rho$ at $\una^{*}$ is surjective. We have a canonical isomorphism (see \cite[Part 1, \S4]{ArtinLecture})
\begin{equation*}
\oplus_{i=1}^{r}T_{0_{i}}V_{i}\cong k[t,y]/(f,\partial_{y}f,\partial_{t}f).
\end{equation*}
Identifying $T_{\una^{*}}\calA'_{N}$ with $\calA_{N}$ in the usual way, the tangent map $d\rho:T_{\una^{*}}\calA'_{N}\to \oplus_{i}T_{0_{i}}V_{i}$ takes the form
\begin{equation}\label{tang}
(a_{i}(t))_{1\leq i\leq n}\mapsto\sum_{i=1}^{n}a_{i}(t)y^{n-i}\in k[t,y]/(f,\partial_{y}f,\partial_{t}f).
\end{equation}
Since $f$ has only isolated singularities, $f$ does not have multiple factors. Since $\textup{char}(k)=0$ or 
$\textup{char}(k)>n$, $f$ and $\partial_{y}f$ are coprime as elements in $k(t)[y]$, hence the ideal $(f,\partial_{y}f)\subset k[t,y]$ contains some nonzero polynomial $g(t)\in k[t]$. Let $S=k[t]/(g(t))$, which is a finite-dimensional $k$-algebra. Then $\oplus_{i=1}^{r}T_{0_{i}}V_{i}$ is a quotient of $\sum_{i=0}^{n-1}S y^{i}$. For large $N$, the map $k[t]_{\deg\leq N}\to S$ is surjective, so is the tangent map \eqref{tang}. This proves $\rho$ is smooth at $\una^{*}$ for large $N$.

By the remark made after \eqref{localDef} in the proof of Proposition \ref{p:versal}, to prove the codimension estimate for $V_{1}$ around $0_{1}$, we only need to show that the same codimension estimate holds for $\calA_{N}$ around $\una^{*}$, for large enough $N$. To be precise, we would like to stratify $\calA_{N}$ by the $\delta$-invariants of the affine curves $\calY_{\una}$ (instead of the projective ones as considered in \S\ref{ss:Hit}). We call these strata $\calA_{N,\delta}$. For fixed $\delta_{0}$, we would like to show that once $N$ is large (depending on $\delta_{0}$), we have
\begin{equation*}
\codim_{\calA_{N}}(\calA_{N,\delta_{0}})\geq\delta_{0}.
\end{equation*}
The proof for this is completely analogous to Ng\^o's argument in \cite[Proposition 5.7.2]{NgoFL}, which works for spectral curves over any given curve, not necessarily complete. Our condition $\textup{char}(k)>n$ or $\textup{char}(k)=0$ is also needed here because the argument in \cite[Proposition 5.7.2]{NgoFL} relies on the codimension calculation of Goresky, Kottwitz and MacPherson in \cite{GKM}, which was done under the assumption $\textup{char}(k)>n$ or $\textup{char}(k)=0$. This completes the proof of the lemma.
\end{proof}

\subsection{Proof of Theorem \ref{th:main}}\label{ss:proof}
Let $C$ be an integral curve over $k$ with planar singularities. Above we constructed a family $\pi:\calC\to\calB$ containing $C=\pi^{-1}(b_{0})$ as a fiber, and satisfies (A1)-(A3) in \S\ref{ss:assumption}. Making an \'etale base change of $\calB$, we may further assume (A4) holds. Applying Theorem \ref{th:pervmac} to this family and taking the stalk of the relevant complexes at $b_{0}$, we obtain
\begin{equation*}
\cohog{*}{\Hilb^n(C)}\cong\bigoplus_{i+j\leq n, i,j\geq0}(\pH^{i+d_{\calB}}L)_{b_{0}}[-d_{\calB}-i-2j](-j).
\end{equation*}
As discussed in \S\ref{ss:fil}, the perverse filtration on $L$ induces a filtration $P_{\leq i}$ on $\cohog{*}{\cJac(C)}=L_{b_{0}}$, and $(\pH^{i+d_{\calB}}L)_{b_{0}}[-i-d_{\calB}]=\Gr^{P}_{i}(\cohog{*}{\cJac(C)})$. Now Theorem \ref{th:main} is almost proved, except we need to show that the perverse filtration $P_{\leq i}$ on $\cohog{*}{\cJac(C)}$ is independent of the choice of the deformation $\calC\to\calB$ satisfying (A1)-(A4). Let $\calC_{\calV}\to\calV$ be a versal deformation of $C$, which satisfies (A1)-(A3) as proved in Proposition \ref{p:versal}. Making an \'etale base change of $\calV$ (which preserves versality), we may assume $\calC_{\calV}\to\calV$ satisfies (A1)-(A4). By versality, there exists an \'etale neighborhood $\calB'$ of $b_{0}\in\calB$ and a morphism $\varphi:\calB'\to\calV$ such that $\calC'=\calC|_{\calB'}$ is obtained from $\calC_{\calV}$ via base change by $\varphi$. We thus have a diagram
\begin{equation*}
\calB\leftarrow\calB'\xrightarrow{\varphi}\calV.
\end{equation*}
Applying Proposition \ref{p:canonfil} to both arrows above, we conclude that $P_{\leq i}$ on $\cohog{*}{\cJac(C)}$ defined using the perverse filtration of the family $\calB$ is the same as the one defined using $\calV$. Since $\calV$ is fixed, $P_{\leq i}$ is independent of the choice of $\calB$. This shows the canonicity of the perverse filtration $P_{\leq i}$, and finishes the proof of Theorem \ref{th:main}.

\subsection{Functional equation}
Let
\begin{equation*}
Z_{\Hilb}(s,t,C):=\sum_{i,n\geq0}\dim\cohog{i}{\Hilb^{n}(C)}s^{i}t^{n}.
\end{equation*}
By Theorem \ref{th:main}, $Z_{\Hilb}(s,t,C)$ is a rational function of the form
\begin{equation*}
Z_{\Hilb}(s,t,C)=\frac{P(s,t)}{(1-t)(1-s^{2}t)}
\end{equation*}
where $P(s,t)=\sum_{i,j}\dim\Gr^{P}_{i}\cohog{j}{\cJac(C)}s^{j}t^{i}\in\ZZ[s,t]$ is a polynomial of bidegree degree $(2g_{a},2g_{a})$. Moreover, by the isomorphism \eqref{c1i}, the cup product by $c_{1}(\calL_{\det})$ gives an isomorphism
\begin{equation}\label{HLef}
c_{1}(\calL_{\det})^{i}:\Gr^{P}_{g_{a}-i}\cohog{j}{\cJac(C)}\isom\Gr^{P}_{g_{a}+i}\cohog{j+2i}{\cJac(C)}(i)
\end{equation}
for any $i,j\geq0$, which implies a symmetry on the polynomial $P(s,t)$, and hence the following functional equation for $Z_{\Hilb}(s,t,C)$:
\begin{equation*}
Z_{\Hilb}(s,t,C)=(st)^{2g_{a}-2}Z_{\Hilb}(s,s^{-2}t^{-1},C).
\end{equation*}

\subsection{Proof of Theorem \ref{th:zeta}}
The rationality of $Z_{\Hilb}(t,C/\FF_{q})$ follows by taking the Frobenius trace on the identity \eqref{Zeta}. The functional equation follows by taking the Frobenius trace on the isomorphism \eqref{HLef}.

\subsection{A local Macdonald formula}\label{ss:local}
Let $\hatO$ be a complete local reduced $k$-algebra of dimension 1 with maximal ideal $\frm$ and residue field $k$. We say $\hatO$ is {\em planar} if further more $\dim_{k}\frm/\frm^{2}\leq2$, i.e., $\hatO\cong k[[x,y]]/(f)$ for some $0\neq f\in k[[x,y]]$ without multiple factors. We may define the Hilbert scheme $\Hilb^{n}(\hatO)$ of length $n$ quotient algebras of $\hatO$. 

We may also define a ``compactified Picard'' for $\hatO$ as follows. Let $\hatK$ be the ring of fractions of $\hatO$, which is a finite product of $k((t))$. Let $\cPic(\hatO)$ be the functor which associates to every noetherian $k$-algebra $R$ the set of $R\hotimes_{k}\hatO$-submodules $M\subset R\hotimes_{k}\hatK$ such that for some (equivalently, any) non-zero-divisor $t\in\frm$, there exists an integer $i\geq0$ so that $R\hotimes_{k}t^{i}\hatO\subset M\subset R\hotimes_{k}t^{-i}\hatO$ and $R\hotimes_{k}t^{-i}\hatO/M$ (hence $M/R\hotimes_{k}t^{i}\hatO$) is a projective $R$-module. The functor $\cPic(\hatO)$ is a disjoint union $\cPic(\hatO)=\bigsqcup_{n\in\ZZ}\cPic^{n}(\hatO)$ according to the volume of $M$, defined as $\rk_{R}M/R\hotimes_{k}t^{i}\hatO-\dim_{k}\hatO/t^{i}\hatO$ for $i$ large. Each $\cPic^{n}(\hatO)$ is represented by an ind-scheme which is {\em not} of finite type. The reduced structure $\cPic^{\red}(\hatO)$ is an infinite union of projective varieties, all of dimension $\delta(\hatO)$ (the local $\delta$-invariant of $\hatO$). The above results are consequences of the theory of affine Springer fibers (see \cite{KL}, and also \cite[\S3]{NgoFL}), because $\hatO$ can be realized as the germ of a spectral curve.

Let $\Pic(\hatO)=\hatK^{\times}/\hatO^{\times}$, viewed as a group ind-scheme over $k$. Then $\Pic(\hatO)$ acts on $\cPic(\hatO)$: an element $g\in\hatK^{\times}/\hatO^{\times}$ sends $M\subset R\hotimes_{k}\hatK$ to $Mg$.

Let $\calO_{\hatK}$ be the ring of integers of $\hatK$. Then $\Lambda=\hatK^{\times}/\calO_{\hatK}^{\times}$ is a free abelian group of rank $r$ ($r$ is the number of analytic branches of $\hatO$). Let 
\begin{equation*}
\val:\hatK^{\times}\to\Lambda.
\end{equation*}
be the valuation map. Choosing a section of $\val$, we may identify $\Lambda$ as a subgroup of $\hatK^{\times}$, hence $\Lambda$ also acts on $\cPic(\hatO)$. It is proved by Kazhdan and Lusztig \cite{KL} that $\cPic^{\red}(\hatO)/\Lambda$ is a projective variety. Two different sections of $\val$ can be transformed to each other by an element of $\calO_{\hatK}$, which is a connected group scheme over $k$ acting also on $\cPic(\hatO)$, therefore the cohomology $\cohog{*}{\cPic^{\red}(\hatO)/\Lambda}$ is independent of the choice of the section of $\val$. 


Let us recall the notion of the virtual Poincar\'e polynomial. Let $k$ be either $\overline{\FF_{p}}$ or a subfield of $\CC$. Let $X$ be a scheme of finite type over $k$. In both cases, we have a weight filtration $W_{\leq i}\cohog{*}{X}$ on the $\ell$-adic cohomology of $X$: when $k=\overline{\FF_{p}}$, we may assume $X=X_{0}\times_{\FF_{q}}\overline{\FF_{p}}$ for some $X_{0}$ over $\FF_{q}$, then the weight filtration comes from the absolute values of the $\Frob_{q}$ action on $\cohog{*}{X}$ (we need to fix an embedding $\iota:\Ql\hookrightarrow\CC$); when $k\subset\CC$, this comes from the weight filtration on the singular cohomology $\cohog{*}{X^{an},\QQ}$ ($X^{an}$ is the underlying analytic space of $X(\CC)$) and the comparison theorem between $\ell$-adic and singular cohomology. Then we define
\begin{equation*}
P^{\vir}(X,s)=\sum_{i,j}(-1)^{j}\dim \Gr^{W}_{i}\cohog{j}{X}s^{i}.
\end{equation*}
Similarly, suppose $\cohog{*}{X}$ carries a filtration $P_{\leq i}$ which is strictly compatible with the weight filtration, we can define the virtual Poincar\'e polynomial $P^{\vir}(\Gr^{P}_{i}\cohog{*}{X},s)$.

\begin{theorem}\label{th:local}
Let $\hatO$ be a planar complete local $k$-algebra of dimension 1, with $r$ analytic branches. Assume either $\textup{char}(k)=0$ or $\textup{char}(k)$ is greater than the multiplicity of the singularity defined by $\hatO$. Then there is filtration $P_{\leq i}$ on $\cohog{*}{\cPic^{\red}(\hatO)/\Lambda}$, normalized such that $\Gr^{P}_{i}=0$ unless $0\leq i\leq 2\delta(\hatO)$, strictly compatible with the weight filtration, such that we have an identity in $\ZZ[[s,t]]$
\begin{equation*}
\sum_{n}P^{\vir}(\Hilb^{n}(\hatO),s)t^{n}=\frac{\sum_{i}P^{\vir}(\Gr^{P}_{i}\cohog{*}{\cPic^{\red}(\hatO)/\Lambda},s)t^{i}}{(1-t)^{r}}.
\end{equation*}
\end{theorem}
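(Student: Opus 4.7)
The plan is to reduce Theorem \ref{th:local} to Theorem \ref{th:main} by realizing $\hatO$ as the unique singularity of a projective integral planar curve $C$, then combining a K\"unneth-type decomposition of $\Hilb^{n}(C)$ with Ng\^o's product formula for the compactified Jacobian of $C$.

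First I would globalize: using the polynomial representative of $\hatO$ provided by Artin-Hironaka (as cited in the proof of Lemma \ref{l:codim}), write $\hatO\cong k[[x,y]]/(f)$ with $f\in k[x,y]$, and then modify $f$ by a generic perturbation in a sufficiently high power of the maximal ideal so that (i) the singularity at the origin is preserved by finite determinacy, (ii) the affine curve has no other singularities, and (iii) the leading form of $f$ defines smooth points at infinity. The projective closure $C\subset\PP^{2}$ is then an integral projective planar curve whose only singularity is at the origin with completed local ring $\hatO$, and the characteristic hypothesis guarantees that Theorem \ref{th:main} applies. Let $\tilde C$ be the normalization of $C$, $g$ its geometric genus, and $\tilde p_{1},\ldots,\tilde p_{r}$ the preimages of the singular point $p$.

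Next, a length-$n$ subscheme of $C$ decomposes uniquely according to whether its support lies at $p$ or on $C^{\sm}=C\setminus\{p\}$, giving a locally closed stratification $\Hilb^{n}(C)=\sqcup_{n_{0}+n_{\sm}=n}\Hilb^{n_{0}}(\hatO)\times\Sym^{n_{\sm}}(C^{\sm})$. Because $\Hilb^{n}(C)$ is proper, its virtual Poincar\'e polynomial coincides with its compactly supported version, and additivity of $P^{\vir}_{c}$ on locally closed strata together with K\"unneth gives
\begin{equation*}
\sum_{n}P^{\vir}(\Hilb^{n}(C),s)t^{n}=\Bigl(\sum_{k}P^{\vir}(\Hilb^{k}(\hatO),s)t^{k}\Bigr)\cdot\sum_{m}P^{\vir}_{c}(\Sym^{m}(C^{\sm}),s)t^{m}.
\end{equation*}
A direct computation using $C^{\sm}=\tilde C\setminus\{\tilde p_{1},\ldots,\tilde p_{r}\}$ yields $P^{\vir}_{c}(C^{\sm},s)=s^{2}-2gs-(r-1)$, and the standard exponential formula for virtual Poincar\'e polynomials of symmetric powers then produces $\sum_{m}P^{\vir}_{c}(\Sym^{m}(C^{\sm}),s)t^{m}=(1-t)^{r-1}(1-st)^{2g}/(1-s^{2}t)$.

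On the Jacobian side, I would apply Ng\^o's product formula for compactified Jacobians (\cite[Proposition 4.15.1]{NgoFL}, the same decomposition invoked in the proof of Theorem \ref{th:supp}) inside the versal family $\calC\to\calB$ of Proposition \ref{p:versal} to produce a K\"unneth-type decomposition
\begin{equation*}
\cohog{*}{\cJac(C)}\cong\cohog{*}{\cPic^{\red}(\hatO)/\Lambda}\otimes\cohog{*}{\Jac(\tilde C)}
\end{equation*}
compatibly with the perverse filtration; this tensor splitting defines the canonical filtration $P_{\leq i}$ on $\cohog{*}{\cPic^{\red}(\hatO)/\Lambda}$ asserted by the theorem. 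For the smooth factor the perverse filtration agrees with the degree filtration $\bigwedge^{i}\cohog{1}{\tilde C}$, contributing $(1-st)^{2g}$ to the generating function, so $\sum_{i}P^{\vir}(\Gr^{P}_{i}\cohog{*}{\cJac(C)},s)t^{i}=P_{\hatO}(s,t)(1-st)^{2g}$, with $P_{\hatO}(s,t)=\sum_{i}P^{\vir}(\Gr^{P}_{i}\cohog{*}{\cPic^{\red}(\hatO)/\Lambda},s)t^{i}$. Substituting into the virtual form of Theorem \ref{th:main} and canceling the common $(1-st)^{2g}$ and $(1-s^{2}t)$ factors leaves $\sum_{k}P^{\vir}(\Hilb^{k}(\hatO),s)t^{k}=P_{\hatO}(s,t)/(1-t)^{r}$, as desired. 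Independence of $P_{\leq i}$ from the choice of $C$ follows by a versality argument parallel to \S\ref{ss:proof}, via Proposition \ref{p:canonfil} applied to the relative compactified Picards.

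The main obstacle will be the third step: verifying that Ng\^o's product formula respects the perverse filtration in the versal family. Ng\^o's decomposition is an isomorphism of ind-schemes, but to extract the required tensor factorization on each perverse cohomology sheaf one must run the product construction relative to $\calB$ and use the support theorem (Theorem \ref{th:supp}) to ensure that the induced filtration on the local factor $\cohog{*}{\cPic^{\red}(\hatO)/\Lambda}$ is canonical and independent of the global embedding.
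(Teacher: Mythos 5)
Your proposal has a genuine gap at exactly the point you flag as the ``main obstacle'', and the way the paper avoids it is by making a much more constrained choice of globalization than yours.

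You globalize $\hatO$ by taking a generic high-degree plane curve $C\subset\PP^{2}$ with the prescribed singularity, so the normalization $\tilde C$ has positive genus $g$. This forces you to claim a cohomological K\"unneth factorization $\cohog{*}{\cJac(C)}\cong\cohog{*}{\cPic^{\red}(\hatO)/\Lambda}\otimes\cohog{*}{\Jac(\tilde C)}$ that is \emph{compatible with the perverse filtration}, and that the perverse filtration on the factor $\cohog{*}{\Jac(\tilde C)}$ is the exterior-algebra degree filtration. Neither step comes for free. Ng\^o's product formula \eqref{prod} is a homeomorphism $\cPic^{\red}(\hatO)\twtimes{\Pic^{\red}(\hatO)}\Pic(C)\to\cPic(C)$ which, after passing to $\cJac$, is a \emph{twisted} product over $\Jac(\tilde C)$, not an honest product; extracting a canonical tensor splitting of the perverse-graded pieces of $L=\bR p_{*}\Ql$ in the versal family requires an additional argument (a relative Leray--Hirsch / support analysis) that is not in this paper and is not supplied by your sketch. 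So the step you defer (``verifying that Ng\^o's product formula respects the perverse filtration in the versal family'') is a real missing ingredient, not a technical footnote.

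The paper sidesteps all of this by choosing the globalizing curve $C$ to be \emph{rational}: it takes $C$ obtained from $\PP^{1}$ by gluing $r$ points along $\Spec\hatO$, so $\tilde C=\PP^{1}$ and $g=0$. Then $\Pic(\PP^{1})=\ZZ$, the exact-sequence diagram after \eqref{prod} gives an actual homeomorphism $\cJac(C)\cong\cPic^{\red}(\hatO)/\Lambda$, and the filtration $P_{\leq i}$ on the local side is defined simply by transport of structure from the perverse filtration on $\cohog{*}{\cJac(C)}$ (which exists by Theorem \ref{th:main} applied to $C$). The $(1-st)^{2g}$ factor never appears. On the Hilbert-scheme side the paper also works more cleanly: rather than your stratification-plus-additivity argument at the level of $P^{\vir}_{c}$, it applies the motivic product formula $\bigl(\sum[\Hilb^{n}(U)]t^{n}\bigr)\prod_{i}\bigl(\sum[\Hilb^{n}(\hatO_{C,p_{i}})]t^{n}\bigr)=\sum[\Hilb^{n}(C)]t^{n}$ twice, to $C$ and to $\PP^{1}$, and divides, giving $\sum[\Hilb^{n}(\hatO)]t^{n}=\frac{1-\LL t}{(1-t)^{r-1}}\sum[\Hilb^{n}(C)]t^{n}$ before specializing to $P^{\vir}$; your stratification formula is morally the same statement. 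In summary: your generating-function bookkeeping is fine, but you should replace the generic plane-curve globalization by the rational-curve globalization, which makes the perverse-filtration transfer tautological and removes the unproven K\"unneth-compatibility claim from the argument.
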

\begin{proof}
Let us first remark on a product formula for Hilbert schemes of points. Let $C$ be a curve and $U=C-\{p_{1},\cdots,p_{m}\}$ be a Zariski open subset. Then we have the following identity in $\Var_{k}[[t]]$, where $\Var_{k}$ is the Grothendieck group of varieties over $k$.
\begin{equation*}
\left(\sum_{n\geq0}[\Hilb^{n}(U)]t^{n}\right)\prod_{i=1}^{m}\left(\sum_{n\geq0}[\Hilb^{n}(\hatO_{C,p_{i}})]t^{n}\right)=\sum_{n\geq0}[\Hilb^{n}(C)]t^{n}.
\end{equation*}

Given $\hatO$, there exists a rational projective curve $C$ over $k$, together with a point $p\in C(k)$ such that
\begin{itemize}
\item $C$ is nonsingular away from $p$;
\item $\hatO_{C,p}\cong\hatO$.
\end{itemize}
Let $\nu:\PP^{1}\to C$ be the normalization and let $\nu^{-1}(p)=\{p_{1},\cdots,p_{r}\}$, $U=C-\{p\}=\PP^{1}-\{p_{1},\cdots,p_{r}\}$. Applying the product formula to $C$ we get
\begin{equation*}
\left(\sum_{n\geq0}[\Hilb^{n}(U)]t^{n}\right)\left(\sum_{n\geq0}[\Hilb^{n}(\hatO)]t^{n}\right)=\sum_{n\geq0}[\Hilb^{n}(C)]t^{n}.
\end{equation*}
Applying the product formula to $\PP^{1}$ we get
\begin{eqnarray*}
\frac{\sum_{n\geq0}[\Hilb^{n}(U)]t^{n}}{(1-t)^{r}}&=&\left(\sum_{n\geq0}[\Hilb^{n}(U)]t^{n}\right)\prod_{i=1}^{r}\left(\sum_{n\geq0}[\Hilb^{n}(\hatO_{\PP^{1},p_{i}})]t^{n}\right)\\
&=&\sum_{n\geq0}[\Hilb^{n}(\PP^{1})]t^{n}=\sum_{n\geq0}[\PP^{n}]t^{n}=\frac{1}{(1-t)(1-\LL t)}.
\end{eqnarray*}
where $\LL$ is the class of $\AA^{1}$. Taking the quotient of the two identities, we get
\begin{equation*}
\sum_{n\geq0}[\Hilb^{n}(\hatO)]t^{n}=\frac{1-\LL t}{(1-t)^{r-1}}\sum_{n\geq0}[\Hilb^{n}(C)]t^{n}.
\end{equation*}
Taking the virtual Poincar\'e polynomials of both sides, and applying Theorem \ref{th:main} to $C$ (which is applicable by the assumption on $\textup{char}(k)$), we get
\begin{equation}\label{pre}
\sum_{n\geq0}P^{\vir}(\Hilb^{n}(\hatO),s)t^{n}=\frac{\sum_{i}P^{\vir}(\Gr^{P}_{i}\cohog{*}{\cJac(C)},s)t^{i}}{(1-t)^{r}}.
\end{equation}

Analogous to Ng\^o's product formula \cite[\S4.15]{NgoFL}, we also have a product formula relating the local and global Picard schemes, which we state below (the proof is the same as \cite[Th\'eor\`eme 4.6]{NgoFib}). We have natural morphisms $\Pic(\hatO)\to\Pic(C)$ and $\cPic(\hatO)\to\cPic(C)$ given by gluing the local objects with the trivial line bundle on $U$. There is a homeomorphism
\begin{equation}\label{prod}
\cPic^{\red}(\hatO)\twtimes{\Pic^{\red}(\hatO)}\Pic(C)\to\cPic(C).
\end{equation}
The local and global Picard fit into a commutative diagram of exact sequences
\begin{eqnarray*}
\xymatrix{1\ar[r] & \calO^{\times}_{\hatK}/\hatO^{\times}\ar[r]\ar@{=}[d] & \Pic(\hatO)\ar[r]^{\val}\ar[d] & \Lambda\ar[d]^{\text{sum}}\ar[r] & 1\\
1\ar[r] &\calO^{\times}_{\hatK}/\hatO^{\times}\ar[r] &\Pic(C)\ar[r]^{\nu^{*}} & \Pic(\PP^{1})\ar[r] & 1}
\end{eqnarray*}
Since $\Pic(\PP^{1})=\ZZ$, the map ``sum'' just means the sum map $\Lambda\cong\ZZ^{r}\to\ZZ$. Therefore, after choosing a section of $\val$, we have homeomorphisms
\begin{equation}\label{lgPic}
\cPic(C)\cong\cPic^{\red}(\hatO)/\ker(\textup{sum}) \textup{ and }\cJac(C)\cong\cPic^{\red}(\hatO)/\Lambda.
\end{equation}
Finally we define the filtration $P_{\leq i}$ on $\cohog{*}{\cPic^{\red}(\hatO)/\Lambda}$ to be the transport of the perverse filtration on $\cohog{*}{\cJac(C)}$. It is easy to see that this filtration satisfies the requirements in the Theorem. Therefore we get an isomorphism of bi-filtered graded vector spaces
\begin{equation*}
(\cohog{*}{\cJac(C)},P_{*},W_{*})\cong(\cohog{*}{\cPic^{\red}(\hatO)/\Lambda},P_{*},W_{*}).
\end{equation*}
This, together with \eqref{pre}, implies the desired formula.
\end{proof}

\end{document}